\definecolor{labelkey}{rgb}{0,0.08,0.45}
\definecolor{refkey}{rgb}{0,0.6,0.0}
\definecolor{Brown}{rgb}{0.45,0.0,0.05}
\definecolor{dgreen}{rgb}{0.00,0.49,0.00}
\definecolor{dblue}{rgb}{0,0.08,0.75}
\newcommand{\iter}{i}
\newcommand{\Iter}{k}
\newcommand{\scO}{\mathcal{O}}
\newcommand{\Y}{\mathcal{Y}}
\newcommand{\cZ}{\mathcal{Z}}
\newcommand{\Ex}{\mathbb{E}}
\newcommand{\N}{\mathbb{N}}
\newcommand{\R}{\mathbb{R}}
\newcommand{\Prob}{\mathbb{P}}
\DeclareMathOperator*{\argmin}{argmin}
\newtheorem{lemma}{Lemma}
\newtheorem{fact}{Fact}
\newtheorem{corollary}{Corollary}
\newtheorem{theorem}{Theorem}
\theoremstyle{definition}
\newtheorem{remark}{Remark}
\newcommand{\abs}[1]{\lvert{#1}\rvert}
\newcommand{\norm}[1]{\lVert{#1}\rVert}
\newcommand{\scalarp}[1]{\langle{#1}\rangle}
\newcommand{\minimize}[2]{\ensuremath{\underset{\substack{{#1}}}%
{\text{\rm minimize}}\;\;#2 }}
\newcommand{\HH}{H}
\newcommand{\EE}{\ensuremath{\mathsf E}}
\newcommand*\circled[1]{\tikz[baseline=(char.base)]{
            \node[shape=circle,draw,inner sep=2pt,font=\scriptsize] (char) {#1};}}
\numberwithin{equation}{section}
\title{ { \sffamily High Probability Bounds for Stochastic Subgradient Schemes with Heavy Tailed Noise
} }
\author[1, 5]{Daniela A. Parletta}
\author[2]{Andrea Paudice}
\author[3, 5]{Massimiliano Pontil}
\author[4, 5]{Saverio Salzo}
\affil[1]{\footnotesize Department of Mathematics, University of Genoa, Via Dodecaneso 35, 16146 Genova, Italy}
\affil[2]{\footnotesize Department of Computer Science, University of Milan, Via Giovanni Celoria 18, 20133 Milano, Italy}
\affil[3]{\footnotesize Department of Computer Science, UCL, Malet Place, London WC1E 6BT, UK}
\affil[4]{\footnotesize DIAG, Sapienza University of Rome, Via Ariosto, 25, 00185 Roma, Italy}
\affil[5]{\footnotesize CSML, Istituto Italiano di Tecnologia, Via Enrico Melen 83, 16152 Genova, Italy}
\date{}
\begin{document}
\maketitle

\begin{abstract}
In this work we study high probability bounds for stochastic subgradient methods under heavy tailed noise in Hilbert spaces. In this setting the noise is only assumed to have finite variance as opposed to a sub-Gaussian distribution for which it is known that standard subgradient methods enjoy high probability bounds. We analyzed a clipped version of the projected stochastic subgradient method, where subgradient estimates are truncated whenever they have large norms. We show that this clipping strategy leads both to optimal \emph{anytime} and finite horizon bounds for general averaging schemes of the iterates. We also show an application of our proposal to the case of kernel methods which gives an efficient and fully implementable algorithm for statistical supervised learning problems. Preliminary experiments are shown to support the validity of the method.
\end{abstract}

 \vspace{1ex}
\noindent
{\bf\small Keywords.} 
{\small 
Stochastic convex optimization, 
high-probability bounds, 
subgradient method, heavy tailed noise.}\\[1ex]

\allowdisplaybreaks

\section{Introduction}
The subgradient method was introduced in the 1960s by the Russian school of optimization as the natural generalization of the gradient descent method to nonsmooth functions. It was first devised by Shor in 1962, and later studied by Polyak, Demyanov, and Ermoliev  \cite{Polyak1967,Polyak1977,Shor1985, Bertsekas1999}. The stochastic version of this algorithm was considered by Ermoliev in 1969 \cite{Ermoliev1969}, who focused on the convergence of the iterates. Later on, such method has been extensively studied, with most existing results providing upper bounds on the expected optimization error in function values.
Indeed, state of the art convergence results ensure the optimal rate of $\scO\big(1/\sqrt{k}\big)$ for convex Lipschitz functions \cite{Nemirovskij1983,Shamir2013}. On the other hand, high probability bounds have been proved harder to obtain. Differently from bounds in expectation, most high-probability bounds have been derived under {\it light tails} assumptions, meaning with sub-Gaussian noise \cite{Harvey2019a,Harvey2019b,Jain2019}.
Recently, motivated by the fact that real world datasets are abundant but of poor quality, a line of research has started investigating high-probability bounds with {\it heavy-tails} assumptions, that is, with uniformly bounded variance noise.
High probability bounds in this setting, have been proved in \cite{Nazin2019,Gorbunov2021}. Despite obtaining near-optimal rates, both works suffer from either unpractical parameter settings or unrealistic assumptions. Moreover, differently from most results obtained in the light tailed case, in
\cite{Nazin2019,Gorbunov2021}
the analysis is confined to a finite horizon, which is a limitation in many practical scenarios. 
Indeed, finite horizon methods cannot cope with online settings
in which data arrives continuously in a potentially infinite stream of batches and the predictive model is updated accordingly.

In this work we address the following optimization problem
\begin{equation}
\label{eq:mainprob}
\minimize{x \in X}{f(x)},
\end{equation}
where $X\subset \HH$ is a nonempty closed convex and bounded set in a Hilbert space $H$ with diameter $D\geq0$ and $f\colon \HH \to \R$ is a convex Lipschitz continuous function with Lipschitz constant $L>0$. We assume that the projection onto $X$ can be computed explicitly but only a stochastic subgradient of $f$ is available, that is, that for all $x \in X$, we have
\begin{enumerate}
\item $\hat{u}(x,\xi) \in H$ and $\xi$ is a random variable such that $\Ex[\hat{u}(x,\xi)] \in \partial f(x)$.
\item $\Ex[\norm{\hat{u}(x,\xi) - \Ex[\hat{u}(x,\xi)]}^2] \leq \sigma^2$.
\end{enumerate}
We stress that the only assumption made on the stochastic subgradient is that it has uniformly bounded variance, while no additional information on its distribution is available. 
\paragraph{Contributions.}
In relation to problem~\eqref{eq:mainprob}, we study a {\it projected clipped stochastic subgradient method} for which we provide high-probability convergence rates under heavy tailed noise. 
The main contributions of this work are as follows.
\begin{itemize}
\item 
We made a new analysis of the clipped subgradient method which relies on a new decomposition of the error and different statistical properties which contrasts our analysis with that of \cite{Gorbunov2020,Gorbunov2021,Nazin2019,Holland2022}. This allows to obtain a first bound on the objective values which is valid for arbitrary stepsizes, weights, and clipping levels, making their role more transparent in the study of the convergence. See Theorem~\ref{thm:main} and Remark~\ref{rmk:main1}.
\item 
We provide a general convergence result when the algorithm parameters 
obey polynomial laws. This makes clear 
the setting of the stepsizes, weights and clipping levels so to have the optimal convergence rate of $\mathcal{O}(1/\sqrt{k})$.
See Theorem~\ref{thm:main2}.
\item The analysis covers both the finite-horizon and the infinite-horizon settings, general averaging schemes, and can reveal subGaussian tail behavior $\mathcal{O}(\sqrt{\log(\delta^{-1})}/\sqrt{k})$ of the function values. See Corollary~\ref{cor:2023829}.
\item We provide an application of the proposed method to the important case of statistical learning with kernels. Notably, the resulting algorithm is fully practicable and achieves the optimal $\scO\big(1/\sqrt{k}\big)$ rate of convergence in high probability for the excess risk.
\end{itemize}

\subsection{Related Work}

In this section we discuss the current literature on high-probability bounds for the stochastic non-smooth convex setting. A summary of the state-of-the-art is given in \Cref{t:RLcmpx}. 

\paragraph{Light Tails} 
Convergence rates in high probability for light tails noise have been derived in \cite{Harvey2019a,Harvey2019b,Jain2019}. In particular, in \cite{Harvey2019a}, under sub-Gaussian hypothesis, the last iterate of SGD was shown to achieve a convergence rate of $\scO\big((\log k/\sqrt{k})\log(1/\delta) \big)$, even with infinite horizon. The authors also state, without proof, that the average iterate obtains the improved rate of $\scO\big(\log(1/\delta)/\sqrt{k}\big)$, essentially matching the analogue bound in expectation. For strongly convex functions, they show that the last iterate achieves the rate $\scO\big((\log k/k)\log(1/\delta) \big)$, while the {\it suffix average} improves to $\scO\big(1/k\log(1/\delta)\big)$. Unfortunately, the {\it suffix average} can be tricky to implement in an infinite-horizon setting. Therefore, in \cite{Harvey2019b} a simpler-to-implement weighted averaging scheme is shown to obtain the same rate. In \cite{Jain2019} the previous results on the last iterate are improved to the optimal rate, but only when the time horizon is known in advance and the noise is bounded almost surely.

\paragraph{Heavy Tails}
High-probability bounds in this context have been derived for some special settings in \cite{Davis2021,Nazin2019}. The work \cite{Davis2021} considers a class of nonsmooth composite functions, where the objective is the sum of a smooth strongly convex function and a general closed convex term. The authors propose an elegant and neat method, \textsc{ProxBoost}, that combines a robust statistical estimation procedure with the proximal point method to boost any bound in expectation into an high-probability guarantee. When used to boost the optimal method of \cite{Ghadimi2013}, \textsc{ProxBoost} achieves the optimal rate of  $\scO((1/k)\sqrt{\log\left(1/\delta\right)})$. Unfortunately, this method has some practical shortcomings. First, the stochastic oracle is supposed to be available only for the smooth term of the objective function, which, in addition, is essentially limited to quadratics. Second, the proposed method requires at least three nested loops, the outer of which being the proximal point algorithm. This leads to an overall procedure which is rather convoluted and rise concerns about its practicability.

The previous composite objective structure is also analyzed in \cite{Nazin2019}. The authors remove 
the strong convexity assumption on the smooth part, but requires the minimization to be performed over a convex bounded domain.
They develop \textsc{RSMD}, a robust version of stochastic mirror descent enjoying several desirable properties such as a near-optimal rate of $\scO((1/\sqrt{k})\log(1/\delta))$, the absence of a batch-size and a simple constant step-size.
On the other hand, the algorithm relies on the following gradient estimator
\begin{equation*}
\tilde{u}_k \coloneqq 
\begin{cases}
\hat{u}(x_k, \chi^k), \text{ if } \|\hat{u}(x_k, \xi^k) - \bar{g} \| \leq L \|x_k-\bar{x}\| + \lambda + \nu \sigma, \\[1ex]
\bar{g}, \text{ otherwise.}
\end{cases}
\label{eq:truncatedGradient}
\end{equation*}
where $\bar{g}$ is a gradient estimate which is {\it close enough} to the true gradient of the smooth part of the objective at a given point $\bar{x}$. This estimator is problematic, since it is not clear how to obtain $(\bar{g}, \bar{x})$ in practice. The authors suggest to generate (with high-probability) $\bar{g}$ by first sampling {\it enough} oracle estimates at $\bar{x}$ and then by computing their geometric median; a fact that reduce the overall confidence of the procedure. However, computing such mean estimator in high dimensions is a challenging task. Furthermore, the truncation parameter $\lambda$ has to be set to 
\begin{equation*}
\lambda = \sigma \sqrt{\frac{k}{\log\left(1/\delta\right)}} + \nu\sigma,
\label{eq:truncatedGradientparam}
\end{equation*}
which requires the knowledge of the time horizon $k$ and does not allow for any-time guarantees. A follow-up study is \cite{Holland2022}, where the author modifies the any-time to batch conversion from \cite{Cutkosky2019} 
so to extend the guarantees of \textsc{RSMD} 
from the last average iterate, to all the averages generated till the given time horizon. We note that the resulting algorithm, is not fully any-time, since the initial truncation level needs to be set according to the time horizon. Differently from \cite{Nazin2019}, it is assumed that the objective is smooth, while the non-smooth case is left as an open problem. Moreover, since the gradient estimator is the same as in \cite{Nazin2019}, it suffers from the same practicability issues we discussed above and the author leaves as an open problem that of identifying a more practical estimator.
 
Clipping strategies have already been used in \cite{Gorbunov2020,Gorbunov2021}, which provide convergence rates in high probability for a fixed-horizon setting. In contrast to our work, they cover the {\it unconstrained} minimization of convex Lipschitz smooth functions and convex Lipschitz continuous functions respectively. The authors analyze a clipped version of SGD, but the resulting bounds and the algorithm's parameters are subject to
some limitations. In particular, the work \cite{Gorbunov2021}, which addresses our setting, requires the algorithm parameters to be set as follows.

\begin{table}[t]
\centering 
\resizebox{\textwidth}{!}{%
\begin{tabular}{ccccccc}
Method  & Rate & Noise & Any-Time & Function Type & Constraints & Ref. \\ 
\midrule
\hline
& & & & & \\ 
\textsc{SGD} & {\small $\frac{\log k}{\sqrt{k}} \log \left(1/\delta\right)$} & LT & \ding{51} & Lipschitz & \ding{51} (Bounded) & \cite{Harvey2019a} \\
& & & & & \\ 
\hline
& & & & & \\ 
\textsc{SGD} & {\small $\sqrt{\frac{\log \left(1/\delta\right)}{k}}$} & LT & \ding{51}\tablefootnote{The rate anytime on the average iterate follows easily from \cite[Proposizion~4.1]{Lan2020}. For readers' convenience we derive this rate in Appendix~\ref{appC}} & Lipschitz & \ding{51} (Bounded) & \cite{Jain2019} \\ 
& & & & & \\
\hline
& & & & & \\ 
\textsc{RSMD} & {\small $\sqrt{\frac{\log(1/\delta)}{k}}$} & HT & \ding{55} & Composite & \ding{51} (Bounded) & \cite{Nazin2019} \\ 
& & & & & \\
\hline
& & & & & \\ 
\textsc{ClippedSGD} & {\small $\sqrt{\frac{\log\left(k/\delta\right)}{k}}$} & HT & \ding{55} & Lipschitz & \ding{55} & \cite{Gorbunov2021} \\ 
& & & & & \\
\hline
\rowcolor{green!10}
& & & & & & \\ 
\rowcolor{green!10}
\textsc{ClippedSGD} & {\small $\sqrt{\frac{\log(1/\delta)}{k}}$} & HT & \ding{51} & Lipschitz & \ding{51} (Bounded) & This work \\ 
\rowcolor{green!10}
& & & & & & \\
\hline
\end{tabular}}
\vspace{2ex}
\caption{Comparison of known high-probability convergence rate for nonsmooth problems. ``Noise'' refers to the type of oracle noise with ``LT'' and ``HT'' standing for Light Tails and Heavy Tails respectively. ``Any-Time'' refers to any-time convergence guarantees. ``Function Type'' refers to the smoothness assumptions on $f$, with ``Lipschitz'' denoting to the case of a convex Lipschitz objective, and ``Composite'' referring to a composite objective $f+h$ where $f$ is convex and smooth and $h$ is a given (non stochastic) convex function. ``Constraints'' refers to the constrain set, ``Bounded'' refers to the optimization over a convex bounded subset of an Hilbert space.}
\label{t:RLcmpx}
\end{table}

\begin{align}
\label{eq:20230830c}
\gamma \leq \min\left\{\frac{\varepsilon}{8 L^2}, \frac{D}{\sqrt{2k}L}, \frac{D}{2L \log\left(4k/\delta\right)}\right\} & \quad \text{Step-Size} \\
\label{eq:20230831a1} 
m \geq \max \left\{ 1, \frac{81 k\sigma^2}{\lambda^2 \log\left(4k/\delta\right)} \right\} & \quad \text{Batch-Size} \\ 
\label{eq:20230831a}
\lambda = \frac{D}{\gamma \log\left(4k/\delta\right)} & \quad \text{Clipping-Level},
\end{align}
where $k$ is the fixed-horizon and $\varepsilon>0$ a free parameter. We note the restrictions in the range of the step-size $\gamma$ and the batch-size $m$. In addition, they are coupled together. Indeed, in \cite[Corollary 5.1]{Gorbunov2021} they show that it is possible to use $m=1$ but at the expense of shrinking the step-size $\gamma$ to $\scO\big(1/\sqrt{k\log(k)}\big)$, so that the final rate reduces to the order of $\scO\big(\sqrt{\log(k/\delta)/k}\big)$, which is not optimal in this setting. Moreover, all the parameters depend on the time horizon $k$, so that the convergence guarantees are not any-time. On the other hand \cite{Gorbunov2020,Gorbunov2021} consider the minimization on unbounded domains, which is possibly more challenging than our setting.

Finally we note that stochastic gradient methods have been also
studied in conjunction with biased compressor (nonlinear) operators. See, e.g., \cite{Richtarik2021} and reference therein. In this respect we note that the clipping operator, being a projection onto a ball, is not a compressor and moreover it is invoked dynamically with time-varying radii.

\subsection{Notation and basic facts}
We set $\N = \{1,2,\dots, \}$ the set of natural numbers starting from $1$.
We set $\R_+ = \left[0,+\infty\right[$ and $\R_{++}=\left]0,+\infty\right[$ the 
sets of positive and strictly positive real numbers respectively. 
For any $p,q \in \R$ we set $p\vee q = \max\{p,q\}$, $p\wedge q = \min\{p,q\}$, and $p_+ = p\vee 0$.
We denote by $\log$ the natural logarithm function. 
A sequence of real numbers $(\chi_k)_{k \in \N}$ is called increasing if, for every $k \in \N$,
$\chi_k \leq \chi_{k+1}$.
In the following $H$ will 
be a real Hilbert space and $\scalarp{\cdot, \cdot}$ and $\norm{\cdot}$
will denote its scalar product and associated norm. For a convex function $f\colon H \to \R$, the subdifferential of $f$
at a point $x \in \HH$ is defined as
\begin{equation*}
    \partial f(x) = \big\{ u \in \HH\,\big\vert\,\forall\, y \in \HH\colon f(y) \geq f(x) + \scalarp{y-x,u} \big\}.
\end{equation*}
For a closed convex set $X \subset H$ we denote by $P_X$ the orthogonal projection operator onto $X$.

We recall the following Bernstein's inequality for martingales \cite{Freedman1975}.

\begin{fact}[Freedman's Inequality for Martingales]
\label{thm:freedman}
Let $(X_k)_{k \in \N}$ be a martingale difference sequence such that for all $k$
\begin{enumerate}[leftmargin=.75cm, label={\rm (\roman*)}]
\item $|X_k| \leq c$ a.s.,
\item $\sigma_k^2 \coloneqq \Ex[X^2_k |X_1,\dots,X_{k-1}] < \infty$.
\end{enumerate}
Let $k \in \N$ and set $V_k = \sum_{\iter=1}^k \sigma_i^2$. Then for every $\eta$ and $F$ in $\R_{++}$,
\begin{equation}
\Prob \left(\sum_{\iter=1}^n X_i > \eta, V_k \leq F \right) \leq \exp \left(- \frac{1}{2} \frac{\eta^2}{F + \frac{1}{3}\eta c}\right) \;.
\end{equation}
\end{fact}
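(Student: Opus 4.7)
The plan is to follow the exponential-supermartingale technique of Freedman. For each $\theta > 0$, set $S_n = \sum_{i=1}^n X_i$ and introduce
\[
Z_n(\theta) = \exp\bigl(\theta S_n - g(\theta)\, V_n\bigr), \qquad g(\theta) := \frac{e^{\theta c} - 1 - \theta c}{c^2}.
\]
The first step is to show that $(Z_n(\theta))_{n \in \N}$ is a nonnegative supermartingale of mean at most $1$ with respect to $\mathcal{F}_n := \sigma(X_1,\dots,X_n)$. This reduces to the elementary bound $e^{\theta x} \leq 1 + \theta x + g(\theta) x^2$ valid for $|x| \leq c$, which I would prove by monotonicity of $x \mapsto (e^{\theta x} - 1 - \theta x)/x^2$. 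Combined with $\Ex[X_n \mid \mathcal{F}_{n-1}] = 0$ and $|X_n| \leq c$, it yields $\Ex[e^{\theta X_n} \mid \mathcal{F}_{n-1}] \leq 1 + g(\theta)\,\sigma_n^2 \leq e^{g(\theta) \sigma_n^2}$, from which the supermartingale property follows immediately.

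The second step handles the side condition $\{V_k \leq F\}$, which is not purely a function of $S_k$. Since each $\sigma_i^2$ is $\mathcal{F}_{i-1}$-measurable, the process $(V_n)$ is predictable, so $\tau := \inf\{n \geq 0 : V_{n+1} > F\}$ is a stopping time and $\{V_k \leq F\} \subseteq \{\tau \geq k\}$. By optional stopping, $Z_{k \wedge \tau}(\theta)$ is still a supermartingale of mean at most $1$, and on the event of interest one has $S_{k \wedge \tau} = S_k > \eta$ while $V_{k \wedge \tau} \leq F$. Markov's inequality then gives, for every $\theta > 0$,
\[
\Prob\bigl(S_k > \eta,\; V_k \leq F\bigr) \leq \exp\bigl(-\theta \eta + g(\theta) F\bigr).
\]

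The last step is to optimize over $\theta$. Taking $\theta^\star = c^{-1}\log(1 + c\eta/F)$ makes the derivative vanish and converts the right-hand side into the Bennett-type quantity $\exp\bigl(-(F/c^2)\, h(c\eta/F)\bigr)$, with $h(u) = (1+u)\log(1+u) - u$. The Bernstein form stated in the fact then follows from the elementary inequality $h(u) \geq u^2/(2 + 2u/3)$ for $u \geq 0$, which I expect to be the only mildly delicate point; it can be verified by comparing the two sides at $u = 0$ together with their first derivatives and then differentiating once more. Substituting this bound into the Bennett expression produces $\exp\bigl(-\eta^2/(2F + 2c\eta/3)\bigr)$, matching the claim.
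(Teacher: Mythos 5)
The paper does not prove this statement: it is imported as Fact~\ref{thm:freedman} with a citation to Freedman (1975), so there is no in-paper argument to compare yours against. Your proposal is the standard (essentially Freedman's original) proof and the logic is sound: the exponential supermartingale $Z_n(\theta)=\exp(\theta S_n-g(\theta)V_n)$ with $g(\theta)=(e^{\theta c}-1-\theta c)/c^2$, built from the elementary bound $e^{\theta x}\le 1+\theta x+g(\theta)x^2$ for $x\le c$ together with $\Ex[X_n\mid\mathcal F_{n-1}]=0$; the predictable-stopping argument with $\tau=\inf\{n: V_{n+1}>F\}$ to localize on $\{V_k\le F\}$ (note $\{V_k\le F\}=\{\tau\ge k\}$ since $V$ is nondecreasing, so $Z_{k\wedge\tau}=Z_k$ on the event of interest); Markov's inequality; the Bennett optimization $\theta^\star=c^{-1}\log(1+c\eta/F)$; and finally $h(u)\ge u^2/(2+2u/3)$ to pass to the Bernstein form, which indeed yields $\exp\bigl(-\eta^2/(2F+\tfrac{2}{3}c\eta)\bigr)$, matching the stated bound. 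The only cosmetic mismatch is that the Fact as printed mixes the indices $n$ and $k$ in the event; your proof establishes the fixed-$k$ version, which is exactly what the paper uses downstream in Propositions~\ref{prop:TuningBernstein}, \ref{prop:AnalysisA} and \ref{prop:AnalysisC}.
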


\section{The Algorithm and the Main Results}

In this section we detail the algorithm and describe the main results of the paper.

Since we are in a heavy-tails regime we consider to clip the stochastic subgradient oracle to a given level. We therefore define the following clipping operation, which corresponds to a projection onto the closed ball with radius $\lambda$,
\begin{equation*}
(\forall\, u \in \HH)(\forall\, \lambda \in \R_{++})\qquad
\textsc{CLIP}(u, \lambda) = \min\bigg\{\frac{\lambda}{\norm{u}}, 1\bigg\} u = 
\begin{cases}
u &\text{if } \norm{u}\leq \lambda\\
\frac{\lambda}{\norm{u}}u  &\text{if } \norm{u}>\lambda.
\end{cases}.
\end{equation*}
The algorithm is detailed below.
\begin{algorithm}
\caption{Clipped Stochastic subGradient Method (C-SsGM)}
Given the stepsizes $(\gamma_k)_{k \in \N} \in \R_{++}^\N$, the weights $(w_k)_{k \in \N} \in \R_{++}^\N$, the clipping levels $(\lambda_k)_{k \in \N} \in \R_{++}^\N$, the batch size $m \in \N$, $m\geq 1$, and an initial point $x_1 \in X$,
\begin{equation}
\label{eq:algo}
\begin{array}{l}
\text{for}\;k=1,\ldots\\[1ex]
\left\lfloor
\begin{array}{l}
\text{draw } \bm{\xi}^k = (\xi_j^k)_{1 \leq j \leq m}\ \ m \text{ independent copies of } \xi\\[1ex]
\bar{u}_k = \displaystyle\frac 1 m \sum_{j=1}^k \hat{u}(x_k, \xi^k_j),\\[3ex] 
\tilde{u}_k = 
\textsc{CLIP}(\bar{u}_k, \lambda_k)\\[1ex]
x_{k+1} = P_X(x_k-\gamma_k \tilde{u}_k).
\end{array}
\right.
\end{array}
\end{equation}
From the sequence $(x_k)_{k \in \N}$ one defines also
\begin{equation}
(\forall\, k \in \N)\qquad\bar{x}_k = \bigg( \sum_{\iter=1}^k w_i\bigg)^{-1} \sum_{\iter=1}^k w_i x_i.
\end{equation}
\label{algo:projsub2}
\end{algorithm}
\vspace{1ex}
\begin{remark}
In addition to the sequence $x_k$, Algorithm~\ref{algo:projsub2} requires keeping track of the sequences  $W_k := \sum_{\iter=1}^k w_i$ and $\bar{x}_k$, which can be updated recursively, as $W_{k+1} = W_k + w_{k+1}$ and $\bar{x}_{k+1} = W^{-1}_{k+1} (W_k \bar{x}_k + w_{k+1} x_{k+1})$.
\end{remark}

We will establish high probability convergence rates in several situations depending on the choices of the weights $w_k$'s, the stepsizes $\gamma_k$'s, and the clipping levels $\lambda_k$'s. 

Now we are ready to provide the first main result of the paper.

\begin{theorem}[Main result 1]
\label{thm:main}
Suppose that, for every $k \in \N$, $\lambda_k \geq (1+\varepsilon) L$ with $\varepsilon > 0$ and that the sequence $(w_k/\gamma_k)_{k \in \N}$ is increasing. Let $(\bar{x}_k)_{k \in \N}$ be the sequence generated by Algorithm~\ref{algo:projsub2}. Then for every $k \in \N$ and $\delta \in \left]0,2/e\right]$, the following holds with probability at least $1-\delta$
\begin{align*}
f(\bar{x}_k) - \min_X f &\leq 
\frac{1}{\sum_{\iter=1}^k w_i}\bigg[
\frac{D^2}{2}
\frac{w_k}{\gamma_k} 
+ \frac{2}{3} \left(2 D \cdot \max_{1 \leq \iter \leq \Iter} w_\iter \lambda_\iter + \max_{1 \leq \iter \leq \Iter} w_\iter \gamma_\iter \lambda^2_\iter \right) \cdot \log\left(\frac{2}{\delta}\right) \\
&\qquad+ \frac{1}{\sqrt{2}}\left(4 \bigg(1 + \frac 1 \varepsilon \bigg) \frac{D \sigma}{\sqrt{m}} \sqrt{\sum_{\iter=1}^\Iter 
w_\iter^2} + \sqrt{\bigg(\frac{\sigma^2}{m} + L^2 \bigg) \sum_{\iter=1}^\Iter w_\iter^2 \gamma_\iter^2 \lambda_\iter^2} \right)\sqrt{\log\left(\frac{2}{\delta}\right)} \\ 
&\qquad + \frac{D \sigma^2}{m} \bigg(1+ \frac 1 \varepsilon \bigg) \sum_{\iter=1}^\Iter \frac{w_\iter}{\lambda_\iter}
+ \frac{1}{2} \bigg(\frac{\sigma^2}{m} + L^2\bigg) \sum_{\iter=1}^\Iter w_\iter \gamma_\iter
\bigg].
\end{align*}
\end{theorem}
\begin{remark}
\label{rmk:main1}
Due the above result, it is clear that in order to ensure convergence of Algorithm~\ref{algo:projsub2} we need to control the following quantities
\begin{align}
\label{20230822a}
&\underbrace{\frac{w_k/\gamma_k}{\sum_{i=1}^k w_\iter}}_{
\circled{1}
},\quad
\underbrace{\frac{\max_{1 \leq i \leq k} w_i \lambda_\iter}{\sum_{i=1}^k w_i}}_{\circled{2}},\quad
\underbrace{\frac{\max_{1 \leq i \leq k} w_i \gamma_i \lambda_\iter^2}{\sum_{i=1}^k w_i}}_{\circled{3}}\\[1ex]
\label{20230822b}
&\underbrace{\frac{\sqrt{\sum_{i=1}^k w^2_i}}{\sum_{i=1}^k w_i}}_{\circled{4}}, \quad 
\underbrace{\frac{\sqrt{\sum_{i=1}^k w^2_i \gamma_i^2 \lambda_i^2}}{\sum_{i=1}^k w_i}}_{\circled{5}},\quad \underbrace{\frac{\sum_{i=1}^k w_\iter/\lambda_\iter}{\sum_{i=1}^k w_i}}_{\circled{6}}, \quad \underbrace{\frac{\sum_{i=1}^k w_i \gamma_i}{\sum_{i=1}^k w_i}}_{\circled{7}}.
\end{align}
With the exception of term 1 which is standard in the analysis of SsGM, the rest of the quantities are related to the bias (2,4,6)
and the variance (3,5,7) of the subgradient estimator.
\end{remark}

In the rest of the section we will assume that
the constants $w_k, \lambda_k, \gamma_k$ are set as follows
\begin{equation}
\label{eq:params}
    w_k = k^p,\quad \gamma_k = \frac{\gamma}{k^r}, \quad
    \lambda_k = \max\{\beta k^q, (1+\varepsilon) L\}\quad(p,r,q \in \R, \gamma, \beta, \varepsilon>0)
\end{equation}
and we will show conditions on the exponents so to make the quantities in \eqref{20230822a} and \eqref{20230822b} converging to zero. The related result is given below.
\begin{theorem}[Main result 2]
\label{thm:main2}
Let $(w_k)_{k \in \N}$, $(\gamma_k)_{k \in \N}$ and $(\lambda_k)_{k \in \N}$ be defined as in \eqref{eq:params}
with $p,r,q \in \R$ and $\beta, \varepsilon>0$. Let $\delta \in \left]0,2/e\right]$. Then Algorithm~\ref{algo:projsub2} converges in high probability provided that the following conditions are satisfied
\begin{equation}
\label{eq:convergencecond}
    p>-r,\ r \in \left]0,1\right[,\ q \in \left]0,1\right[,\ \text{and}\ q<\frac{r+1}{2}
\end{equation}
and in such case, with probability greater then $1-\delta$, the following (infinite-horizon) rate holds 
\begin{multline*}
f(\bar{x}_k)- \min_{X} f =\mathcal{O} \bigg(
\frac{1}{k^{1-r}}
+\frac{1}{k^{\min\{p+1, 1-q\}}}
+ \frac{1}{k^{\min\{p+1, r+1-2q\}}}\\
+\frac{1}{k^{\min\{p+1, \frac 1 2 \}}}
+\frac{1}{k^{\min\{p+1, r+ \frac 1 2 - q\}}}
+ \frac{1}{k^{\min\{p+1, q\}}}
+\frac{1}{k^{\min\{p+1, r\}}} 
\bigg).
\end{multline*}
Moreover, the optimal choices, in terms of convergence rates, for the parameters $r$ and $q$ are $r=1/2$ and $q=1/2$, and in such case the following hold.
\begin{enumerate}[leftmargin=.75cm,label={\rm (\roman*)}]
\item\label{thm:main2_i} Suppose that $p>-1/2$ and set $L_\varepsilon = (1+\varepsilon) L$. Then for every $k \in \N$, with probability at least $1-\delta$, we have
\begin{align*}
f(\bar{x}_k)- \min_X f &\leq \frac{(p+1)\vee 1}{\sqrt{k}} 
\bigg[ \frac{D^2}{2 \gamma} \\[1ex]
&\quad + \frac 2 3 \bigg( 2 D \max\Big\{\beta, \frac{L_\varepsilon}{k^{\min\{p+ \frac 1 2, \frac 1 2\}}}\Big\} + \gamma \max\Big\{ \beta^2, \frac{L_\varepsilon^2}{k^{\min\{p+\frac 1 2, 1\}}} \Big\} \bigg) 
\log \frac{2}{\delta}\\[2ex]
&\quad + \frac{1}{\sqrt{2}} \bigg( 4 \left(1+ \frac 1 \varepsilon\right) 
\frac{D \sigma}{\sqrt{m}} \frac{1}{((2p+1)\wedge 1)} + \gamma v_p(k)\sqrt{\frac{\sigma^2}{m}+L^2} \bigg) \sqrt{\log \frac{2}{\delta}}\\[2ex]
&\quad +\frac{1}{(p+ 1/2)\wedge 1} \bigg( \frac{D \sigma^2}{m} \left( 1 + \frac 1 \varepsilon \right) \frac 1 \beta +\frac{\gamma}{2} \left( \frac{\sigma^2}{m} + L^2\right) \bigg)
\bigg],
\end{align*}
where
\begin{equation*}
v_p(k) = 
\begin{cases}
\displaystyle
\sqrt{\frac{2}{(2p)\wedge 1}} \max\Big\{\beta, \frac{L_\varepsilon}{\sqrt{k}} \Big\} &\text{if } p>0 \\[3ex]
\begin{cases}
L_\varepsilon &\text{if } k< L_\varepsilon^2/\beta^2\\[2ex]
\displaystyle
\beta \left( \frac{L_\varepsilon^2/\beta^2}{\sqrt{k}}+1\right) 
&\text{if } k\geq L_\varepsilon^2/\beta^2
\end{cases}&\text{if } p=0 \\[6ex]
\displaystyle
\frac{L_\varepsilon}{k^{p+\frac1 2}} \sqrt{1 - \frac{1}{2p}} + \frac{\max\{ \beta, L_\varepsilon/\sqrt{k}\}}{\sqrt{(2p+1)\wedge 1}} &\text{if } -\frac 1 2 <p<0
\end{cases}
\end{equation*}
\item\label{thm:main2_ii} Suppose that $p=-1/2$ and set $L_\varepsilon = (1+\varepsilon) L$. Then for every $k \in \N$, with probability at least $1-\delta$, we have
\begin{align*}
f(\bar{x}_k)- \min_X f &\leq \frac 3 2 \frac{1}{\sqrt{k}} 
\bigg[ \frac{D^2}{2 \gamma} \\[1ex]
&\quad + \frac 2 3 \bigg( 2 D \max\big\{\beta, L_\varepsilon \big\} + \gamma \max\big\{ \beta^2, L_\varepsilon^2 \big\} \bigg) 
\log \frac{2}{\delta}\\[2ex]
&\quad + \frac{1}{\sqrt{2}} \bigg( 4 \left(1+ \frac 1 \varepsilon\right) 
\frac{D \sigma}{\sqrt{m}} (1+ \log k) + \gamma u(k) \sqrt{\frac{\sigma^2}{m}+L^2} \bigg) \sqrt{\log \frac{2}{\delta}}\\[2ex]
&\quad + \bigg( \frac{D \sigma^2}{m} \left( 1 + \frac 1 \varepsilon \right)
\frac 1 \beta +\frac{\gamma}{2} \left( \frac{\sigma^2}{m} + L^2\right) \bigg) (1+\log k)
\bigg],
\end{align*}
where $u(k) = \sqrt{2}L_\varepsilon + \beta (1+\log k)$.
\item\label{thm:main2_iii} (Finite-horizon) Let $k \in \N$ and set $(\gamma_i)_{1 \leq i \leq k} \equiv \gamma/\sqrt{k}$, $(\lambda_i)_{1 \leq i \leq k} \equiv (\max\{\beta \sqrt{i}, (1+\varepsilon) L\})_{1 \leq i \leq k}$, and
$(w_i)_{1 \leq i \leq k} \equiv (i^p)_{1 \leq i \leq k}$ with $p \geq 0$. Then with probability at least $1-\delta$, we have
\begin{align*}
f(\bar{x}_k)- \min_X f &\leq \frac{p+1}{\sqrt{k}} 
\bigg[ \frac{D^2}{2 \gamma} \\[1ex]
&\quad + \frac 2 3 \bigg( 2 D \max\Big\{\beta, \frac{L_\varepsilon}{\sqrt{k}}\Big\} + \gamma \max\Big\{ \beta^2, \frac{L_\varepsilon^2}{k} \Big\} \bigg) 
\log \frac{2}{\delta}\\[2ex]
&\quad + \frac{1}{\sqrt{2}} \bigg( 4 \left(1+ \frac 1 \varepsilon\right) 
\frac{D\sigma}{\sqrt{m}} + \gamma y_p(k)\sqrt{\frac{\sigma^2}{m}+L^2}  \bigg) \sqrt{\log \frac{2}{\delta}}\\[2ex]
&\quad +\frac{1}{(p+ 1/2)\wedge 1} \frac{D \sigma^2}{m} \left( 1 + \frac 1 \varepsilon \right) \frac 1 \beta + \frac{\gamma}{2} \left( \frac{\sigma^2}{m} + L^2\right) 
\bigg],
\end{align*}
where $y_p(k) = \sqrt{2} \max\{\beta, L_\varepsilon/\sqrt{k}\}$.
\end{enumerate}
\end{theorem}
\begin{remark}\label{rmk:20240413a}\ 
\begin{enumerate}[leftmargin=.75cm,label={\rm (\roman*)}]
\item The previous theorem provides always convergence (not necessarily optimal) for general parameter settings.
In literature it is common to assume from the beginning that $r=q=1/2$ \cite{Nazin2019, Gorbunov2021,Holland2022}. In contrast our analysis
shows {\it a posteriori} that those choices are optimal.
\item\label{rmk:20240413a_ii}  The previous theorem shows that for $p>-1/2$ we have an asymptotic (any time) convergence rate of $\mathcal{O}(1/\sqrt{k})$ (note that $v_p(k)$ is bounded from above) and that for $p=-1/2$ the rate degrades to $(1+\log k)/\sqrt{k}$.
\item Point \ref{thm:main2_iii} of Theorem~\ref{thm:main2} provides rate in the finite horizon setting in which $k$ (the time horizon) is fixed {\it a priori} and the stepsize is constant, set according to that time horizon. Moreover, the form of the bound allows to optimize the stepsize. Indeed, if $k \geq (L_\varepsilon/\beta)^2$, the best stepsize (minimizing the bound) is
\begin{equation*}
\frac{\gamma}{\sqrt{k}} = 
\frac{D}{\sqrt{k}} \bigg( \frac 4 3 \beta^2 \log \frac 2 \delta + 2 \sqrt{\frac{\sigma^2}{m} + L^2} \sqrt{\log \frac{2}{\delta}} + \Big( \frac{\sigma^2}{m} + L^2 \Big) \bigg)^{-1/2}.
\end{equation*}
Compared to \eqref{eq:20230830c}--\eqref{eq:20230831a} (obtained in \cite{Gorbunov2020,Gorbunov2021}),
we see that in our case the batchsize $m$ and the parameter $\beta$ of the clipping levels are completely free and the rate of $\mathcal{O}(1/\sqrt{k})$ is always guaranteed. Note also that here the clipping level is not constant within the time horizon $k$, but it follows the policy $\lambda_i = \max\{\beta \sqrt{i}, (1+\varepsilon)L\}$.
\end{enumerate}
\end{remark}
\vspace{1ex}

From Theorem~\ref{thm:main2}\ref{thm:main2_i}-\ref{thm:main2_iii} it is easy to derive the following result which essentially removes the $\log$ factor in the bound.
\begin{corollary}
\label{cor:2023829}
Let $\delta \in \left]0,2/e\right]$ and set $\beta = \bar{\beta}/\sqrt{\log (2/\delta)}$, with $\bar{\beta} >0$.
Let $(w_k)_{k \in \N}$, $(\gamma_k)_{k \in \N}$ and $(\lambda_k)_{k \in \N}$ be defined as in \eqref{eq:params}
with $q=r=1/2$, $p>-1/2$ and $\varepsilon>0$. Let $(\bar{x}_k)_{k \in \N}$ be the sequence generated by Algorithm~\ref{algo:projsub2}. Then, for every $k \geq (L_\varepsilon/\beta)^{\max\{\frac{2}{2p+1},2\}}$, if $p\neq 0$, and every $k \geq (L_\varepsilon/\beta)^4$, if $p=0$, and with probability at least $1-\delta$, we have
\begin{align*}
f(\bar{x}_k)- \min_X f &\leq \frac{(p+1)\vee 1}{\sqrt{k}} 
\bigg[ \frac{D^2}{2 \gamma}
 + \gamma\bigg( \frac 2 3 \bar{\beta}^2 + \frac{a_p\bar{\beta}}{\sqrt{2}} \sqrt{\frac{\sigma^2}{m}+L^2}  + \frac{\sigma^2/m + L^2}{2((p+ 1/2)\wedge 1)}  \bigg) \\[2ex]
&\quad + D\bigg(\frac 4 3 \bar{\beta}  +  \left(1+ \frac 1 \varepsilon\right)  \bigg( 
 \frac{1}{(2p+ 1)\wedge 1}\frac{4 \sigma}{\sqrt{2 m}} +
 \frac{1}{(p+ 1/2)\wedge 1}\frac{\sigma^2}{m\bar{\beta}}  \bigg) \bigg)\sqrt{\log \frac 2 \delta} 
\bigg],
\end{align*}
where
\begin{equation*}
a_p=
\begin{cases}
\sqrt{2 ((2p)\wedge 1)^{-1}} &\text{if } p>0\\[1ex]
2 &\text{if } p=0\\
\sqrt{1 - (2p)^{-1}} + \sqrt{((2p+1)\wedge 1)^{-1}}&\text{if } -1/2<p<0.
\end{cases}
\end{equation*}
Moreover, for finite horizon setting, with $(\gamma_{i})_{1\leq i \leq k} \equiv \gamma/\sqrt{k}$, $q=1/2$, and $p\geq 0$, we have exactly the same bound except for $k$ that should be taken larger than $(L_\varepsilon/\beta)^2$, the constant $(p+1/2)\wedge 1$ at the denominator of the last term in the first line which is replaced by $1$, and the constant $a_p$ which now is equal to $\sqrt{2}$.
\end{corollary}

\newpage
\begin{remark}\label{rmk:20230830}\ 
\begin{enumerate}[leftmargin=.75cm, label={\rm (\roman*)}]
\item\label{rmk:20230830_i} Corollary~\ref{cor:2023829} shows that for $k$ large enough the variable $f(\bar{x}_k)- \min_X f$
shows a subGaussian tail behavior. In this respect we note that with $p \geq 1/2$ this behavior occurs earlier (with $k \geq (L_\varepsilon/\beta)^2$). This same condition on $k$ ($k \gtrsim \log \delta^{-1}$) occurs also in other works \cite{Nazin2019,Gorbunov2021,Holland2022}.
\item\label{rmk:20230830_ii} The above bound allows to optimize the initial stepsize $\gamma$, which reaches the minimum at
\begin{equation*}
\gamma = D \bigg( \frac 4 3 \bar{\beta}^2 + \sqrt{2}a_p\bar{\beta}\sqrt{\frac{\sigma^2}{m}+L^2}  + \frac{\sigma^2/m + L^2}{((p+ 1/2)\wedge 1)}  \bigg)^{-1/2}
\end{equation*}
making the final bound equal to
\begin{align*}
f(\bar{x}_k)- \min_X f &\leq D\frac{(p+1)\vee 1}{\sqrt{k}} 
\bigg[  \bigg( \frac 4 3 \bar{\beta}^2 + \sqrt{2}a_p\bar{\beta}\sqrt{\frac{\sigma^2}{m}+L^2}  + \frac{\sigma^2/m + L^2}{((p+ 1/2)\wedge 1)}  \bigg) \\[2ex]
& + \bigg(\frac 4 3 \bar{\beta}  +  \left(1+ \frac 1 \varepsilon\right)  \bigg( 
\frac{1}{((2p+1)\wedge 1)}\frac{4\sigma}{\sqrt{2 m}} +
 \frac{1}{(p+ 1/2)\wedge 1}\frac{\sigma^2}{\bar{\beta} m}  \bigg) \bigg)\sqrt{\log \frac 2 \delta} 
\bigg].
\end{align*}
\end{enumerate}
Again, also here it is clear the advantage of our results compared to  
\cite{Gorbunov2020,Gorbunov2021}, since the dependence on the confidence level is $\sqrt{\log(2/\delta)}$.
\end{remark}

\begin{remark}[On the expected optimization error]\label{rmk:in-expctation}\ 
Clipping is unnecessary under our noise assumptions when in-expectation rates are in order, since the average iterate of SsGM already enjoys the optimal convergence rate 
\begin{equation}
\label{ex:ssgm}
\mathbb{E}[f(\bar{x}_k) - f^*] \leq \sqrt{\frac{D^2(\sigma^2/m+L^2)}{k}} \;,
\end{equation}
which is achieved with step-size $\gamma_i = \gamma/\sqrt{i}$ and $\gamma = \sqrt{D^2/(\sigma^2/m+L^2)}$. On the other hand, in \Cref{appC} we could easily derive from our analysis the following in-expectation bound for the clipped-SsGM
\begin{equation}
\label{ex:cssgm}
\Ex[f(\bar{x}_k) - f^*] \leq 
\sqrt{\frac{D^2(\sigma^2/m + L^2)}{k}} + \frac{\sigma^2}{m} \left(1+\frac{1}{\varepsilon}\right) \sqrt{\frac{D^2}{\beta^2 k}} \;.
\end{equation}
We make the following comments: (1) SsGM and C-SsGM share the same optimal step-size; (2) the rate in equation \eqref{ex:cssgm} is worse than that is \eqref{ex:ssgm}, since it exhibits an additional term due to the bias in the subgradient estimators, a fact that we noted also in the experiments under light-tails noise; (3) finally, when $\sigma = 0$, clipping never occurs regardless of $\beta$, and C-SsGM behaves exactly as the deterministic sugradient method and the above bound reduces to
\begin{equation}
f(\bar{x}_k) - f^* \leq \frac{DL}{\sqrt{k}} \;,
\end{equation}
which is the same rate featured by the deterministic subgradient method.
\end{remark}

\begin{remark}[Behaviour under sub-Gaussian noise]\label{rmk:SGnoise}\ 
When the noise is sub-Gaussian, clipping is not necessary, and the classical SsGM method already enjoys the optimal high probability convergence rate. For example, an adaptation of the proof (reported in \Cref{appC}) of Proposition 4.1. in \cite{Lan2020}, along with the choice of $\gamma_i = \gamma/\sqrt{i}$ where
\[
\gamma = \sqrt{\frac{D^2}{4(\eta^2(1+\ln(2/\delta))+L^2)}}
\]
leads to the following high-probability convergence rate
\begin{equation}
\label{eq:SGrate}
f(\bar{x}_k)-f^* \leq \frac{2D}{\sqrt{k}} \left(L +  \eta \sqrt{\log\left(\frac{2e}{\delta}\right)} \right),
\end{equation}
where $\eta > 0$ is the variance proxy parameter of the sub-Gaussian noise. Concerning the comparison with our bounds in Corollary~\ref{cor:2023829}, the following considerations are in order:
\begin{enumerate}[leftmargin=.75cm, label={\rm (\roman*)}]
\item\label{rmk:SGN_i} Our bound in Corollary~\ref{cor:2023829} (which does not take advantage of the sub-Gaussian assumption), after the optimization of the stepsize $\gamma$, shows a worse dependence on the algorithm parameters with respect to \eqref{eq:SGrate}. Furthermore, \cref{eq:SGrate} features a sub-Gaussian behaviour for all $k$, while in the case of clipping this is only obtained for $k \gtrsim \ln(\delta^{-1})$. 
\item\label{rmk:SGN_ii} Our bound depends directly on the variance $\sigma^2$, as opposed to \eqref{eq:SGrate} which depends on the variance proxy parameter $\eta$, which in general is larger than $\sigma$.
\item\label{rmk:SGN_iii} Clipped SsGM requires a larger number of parameters to be set with respect to SsGM. Our results, provide theoretical recipes for tuning $\gamma_i$ and $\lambda_i$, but, in practice, to obtain the optimal performances one has to resort to trial-and-error procedures to optimize such parameters.
\item\label{rmk:SGN_iv} In spite of the worse theoretical bounds, our experiments show that clipping SsGM may perform well even under sub-Gaussian noise, especially when the noise level is large compared to the true subgradients. See Section~\ref{sec:ex1}.
\end{enumerate}
\end{remark}

\section{Convergence Analysis}
\label{sec:convanal}

In this section we provide the fundamental steps of the proof of the main results of the paper. Additional details are given in Appendix~\ref{appA} and \ref{appB}.

\subsection{Part 1}

In this section we address the proof of Theorem~\ref{thm:main}.
We start by gathering the main statistical properties of the clipped subgradient estimator.

\begin{lemma}
\label{lemma:CSGproperties}
Let $x \in X$, $\lambda> L$ and define
\begin{equation*}
\bar{u} = \frac 1 m \sum_{j=1}^m \hat{u}(x,\xi_j)
\quad \text{and} \quad
\tilde{u} = \min\Big\{\frac{\lambda}{\norm{\bar{u}}}, 1\Big\} \bar{u} =
\begin{cases}
\bar{u} &\text{if } \norm{\bar{u}}\leq \lambda\\
\frac{\lambda}{\norm{\bar{u}}}\bar{u}  &\text{if } \norm{\bar{u}}>\lambda.
\end{cases}
\end{equation*}
Set $u = \Ex[\hat{u}(x,\xi_j)] = \Ex[\bar{u}]$ and suppose that $\Ex[\norm{\hat{u}(x,\xi_j) - u}^2] \leq \sigma^2$. Then, the following hold.
\begin{enumerate}[leftmargin=.75cm, label={\rm (\roman*)}]
\item\label{lemma3_i} 
$\Ex\norm{\tilde{u}}^2 \leq \Ex\norm{\bar{u}}^2\leq\dfrac{\sigma^2}{m} + L^2$ 
\quad {\rm (Second Moment)}.
\item\label{lemma3_ii} 
$\norm{\Ex\tilde{u} -  u} \leq  \dfrac{\sigma^2}{m(\lambda-L)}$\quad {\rm (Bias)}.
\item\label{lemma3_iv} 
$\Ex\norm{\tilde{u} - u}^2 \leq  
\dfrac{\sigma^2}{m}\bigg[1 + \bigg(\dfrac{\lambda+L}{\lambda-L}\bigg)^2 \bigg]$
\quad {\rm (MSE)}.
\item\label{lemma3_iii} 
$\Ex\norm{\tilde{u} - \Ex \tilde{u} }^2 \leq 
\dfrac{\sigma^2}{m}\bigg[1 + \bigg(\dfrac{\lambda+L}{\lambda-L}\bigg)^2 \bigg]$
\quad {\rm (Variance)}.
\end{enumerate}
\end{lemma}
\begin{remark}
\label{rmk:eps}
The previous bounds assume that $\lambda>L$. We can restate bounds 
\ref{lemma3_ii} and \ref{lemma3_iii} in different way. Let $\varepsilon>0$ and suppose that $\lambda \geq L_\varepsilon$. Then we have
\begin{equation*}
\norm{\Ex\tilde{u} -  u} \leq \dfrac{\sigma^2}{m}\bigg(1+\frac1 \varepsilon\bigg) \frac{1}{\lambda}
\quad\text{and}\quad
\Ex\norm{\tilde{u} - \Ex \tilde{u} }^2 \leq \frac{\sigma^2}{m} \bigg[\ 1 + \bigg(\frac{2+\varepsilon}{\varepsilon} \bigg)^2\bigg] \leq 4 \frac{\sigma^2}{m} \bigg(1+\frac{1}{\varepsilon}\bigg)^2.
\end{equation*}
\end{remark}

Now, we are ready to tackle the proof of Theorem~\ref{thm:main}. We start by decomposing the error in several possibly simpler terms.

\begin{lemma}[Decomposition of the error]
\label{lemma:ErrorDecomp}
Let $(x_k)_{k \in \N}$ and $(\bar{x}_k)_{k \in \N}$ be generated by Algorithm~\ref{algo:projsub2}. 
Suppose that the sequence $(w_k/\gamma_k)_{k \in \N}$ is increasing. Then,
for all $k \in \N$ and $x \in X$, we have
\begin{equation}
f(\bar{x}_k) - f(x) \leq 
\frac{1}{\sum_{\iter=1}^k w_i}\bigg[
\frac{D^2}{2}
\frac{w_k}{\gamma_k}
+ \underbrace{\sum_{\iter=1}^{k} \theta_i^v}_{\textbf{A}} + \underbrace{ \sum_{\iter=1}^{k} \theta_i^b}_{\textbf{B}} + \frac{1}{2}\underbrace{ \sum_{\iter=1}^{k} \zeta_i}_{\textbf{C}} + \frac{1}{2} \underbrace{\sum_{\iter=1}^{k} \nu_i}_{\textbf{D}}
\bigg] \quad\text{a.s.},
\end{equation}
where 
\begin{itemize}[leftmargin=.75cm]
\item $u_i = \Ex[\bar{u}_i\,\vert\, x_i] =\Ex[\hat{u}(x_i, \xi^i_j)\,\vert\, x_i]$\ \ {\rm(}for $1 \leq j \leq m${\rm)},
\item $\theta_i^v \coloneqq  
w_i \langle \tilde{u}_\iter - \Ex[\tilde{u}_i \,\vert\, x_1, \dots, x_i], x - x_i \rangle$,
\item $\theta_i^b \coloneqq 
w_i \langle \Ex[\tilde{u}_i \,\vert\, x_1, \dots, x_i] - u_i, x - x_\iter \rangle$, 
\item $\zeta_i \coloneqq w_i \gamma_i\big(\norm{\tilde{u}_i}^2 
-\Ex\big[\norm{\tilde{u}_i }^2\,\vert\, x_1, \dots, x_i\big]\big)$,
\item $\nu_i \coloneqq 
w_i \gamma_i\Ex\big[\norm{\tilde{u}_i }^2\,\vert\, x_1, \dots, x_i\big]$.
\end{itemize}
\end{lemma}

From Lemma~\ref{lemma:ErrorDecomp} it is clear that we have to bound with high probability the four summations we named {\bf A}, {\bf B}, {\bf C} and {\bf D}, by applying a Bernstein's type concentration inequality. So, the proof of Theorem~\ref{thm:main} goes through the following steps that we collect in separate propositions.

\begin{remark}[On the choice of the concentration inequality]
Two popular tools to control bounded sums of martingale difference sequences, such as terms {\bf A} and {\bf C}, are Azuma-Hoeffding's and Freedman's inequalities. The former can be used to derive tight high-probability bounds for SsGM under bounded noise (and even sub-Gaussian noise). Indeed, suppose $d_i$ is a martingale difference sequence with $|d_i| \leq b_i$ almost surely. Then, Azuma-Hoeffding's inequality states that, with probability at least $1-\delta$
\begin{equation}
\label{eq:AH}
\sum_{i=1}^k d_i \lesssim \sqrt{\sum_{i=1}^k b_i^2 \cdot \log\left(\frac{2}{\delta}\right)}\,.
\end{equation}
This method works well when $b_i \leq \textsc{constant}$ as in the case of SsGM, where either $d_i = \langle \hat{u}_i - u, x-x_i \rangle$ or $d_i = \|\hat{u}_i\|^2 - \mathbb{E}[\|\hat{u}_i\|^2 |x_1,\dots,x_i]$. On the other hand, in the case of the clipped SsGM, where either $d_i = \theta_i^v$, or $d_i = \zeta_i$, we have $b_i \sim w_i \sqrt{i}$. As a result, the RHS of Azuma-Hoeffding's inequality cannot be compensated by the sum of the weights $W_k = \sum_{i=1}^k w_i$. By contrast, Freedman's inequality provides the following bound
\begin{equation}
\label{eq:F}
\sum_{i=1}^k d_i \lesssim \underset{1 \leq i \leq k} \max b_i \cdot \log\left(\frac{2}{\delta}\right) + \sqrt{F \cdot \log\left(\frac{2}{\delta}\right)} \;,
\end{equation}
where $F$ is the total conditional variance of the $d_i$'s. Note that, differently from \eqref{eq:AH}, the RHS of \eqref{eq:F} only depends on the largest $b_i$ and 
this ultimately can be compensated by $W_k$ as long as $F = \mathcal{O}(W_k)$. 
\end{remark}

\begin{restatable}[Freedman's Bound]{proposition}{propFreedmanBound}
\label{prop:FreedmanBound}
Under the same assumptions of Fact~\ref{thm:freedman}, let $\delta \in \left]0, 2/e \right]$, $k \in \N$ and let $F\geq 0$ be s.t. $V_k \leq F$ a.s. Then, with probability at least than $1-\delta/2$ we have
\begin{equation*}
\sum_{\iter=1}^\Iter X_\iter \leq \frac{2}{3} c \log\left(\frac{2}{\delta}\right) + \sqrt{ 2 F\log\left(\frac{2}{\delta}\right)} \;.
\end{equation*}
\end{restatable}

\begin{restatable}[Analysis of the term {\bf A}]{proposition}{propAnalysisA}
\label{prop:AnalysisA}
Let  $k \in \N$ and $\varepsilon>0$ and suppose that, for every $i =1, \dots, k, \lambda_i \geq (1+\varepsilon) L$. Then, with probability at least $1- \delta/2$, we have 
\begin{equation}
\sum_{\iter=1}^k \theta_i^v \leq \frac{4}{3} D \cdot \max_{1 \leq \iter \leq \Iter} w_\iter \lambda_\iter \cdot \log\left(\frac{2}{\delta}\right) + 2 \bigg(1 + \frac 1 \varepsilon \bigg) \frac{D \sigma}{\sqrt{m}} \sqrt{ 2\sum_{\iter=1}^\Iter 
w_\iter^2} \cdot \sqrt{\log\left(\frac{2}{\delta}\right)} \;.
\end{equation}
\end{restatable}

\begin{restatable}[Analysis of the term {\bf B}]{proposition}{propAnalysisB}
\label{prop:AnalysisB}
Let $k \in \N$. The following hold
\begin{equation}
\sum_{\iter=1}^k \theta_i^b 
\leq \frac{D \sigma^2}{m} \bigg(1+ \frac 1 \varepsilon \bigg) \sum_{\iter=1}^\Iter \frac{w_\iter}{\lambda_\iter} \qquad\text{a.s.}
\end{equation}
\end{restatable}

\begin{restatable}[Analysis of the term {\bf C}]{proposition}{propAnalysisC}
\label{prop:AnalysisC}
Let $k \in \N$. Then, with probability at least $1-\delta/2$ 
\begin{equation*}
\sum_{\iter=1}^\Iter \zeta_i 
\leq \frac{4}{3} \cdot \max_{1 \leq \iter \leq \Iter} w_\iter \gamma_\iter \lambda^2_\iter \cdot \log\left(\frac{2}{\delta}\right) + \sqrt{2\bigg(\frac{\sigma^2}{m} + L^2 \bigg) \sum_{\iter=1}^\Iter w_\iter^2 \gamma_\iter^2 \lambda_\iter^2}\cdot \sqrt{\log\left(\frac{2}{\delta}\right)} \;.
\end{equation*}
\end{restatable}
The proof of the previous propositions are given in Appendix~\ref{appA}. The next one is a direct consequence of the definition of $\nu_i$ and the bound in Lemma~\ref{lemma:CSGproperties}\ref{lemma3_i}.
\begin{restatable}[Analysis of the term {\bf D}]{proposition}{propAnalysisD}
\label{prop:AnalysisD}
Let $k \in \N$. Then we have
\begin{equation*}
\sum_{\iter=1}^\Iter \nu_\iter \leq \bigg(\frac{\sigma^2}{m} + L^2\bigg)
\sum_{\iter=1}^\Iter w_\iter \gamma_\iter\qquad\text{a.s.}
\end{equation*}
\end{restatable}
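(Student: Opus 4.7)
The plan is to apply part~\ref{lemma3_i} of Lemma~\ref{lemma:CSGproperties} conditionally on the past iterates and then simply sum the resulting inequality. More precisely, for each $i$, the random variable $\tilde u_i$ is a function of $x_i$ and of the fresh i.i.d.\ sample $\bm{\xi}^i=(\xi^i_j)_{1\le j\le m}$, drawn independently of $x_1,\dots,x_i$. Hence, conditionally on $(x_1,\dots,x_i)$, the point $x_i$ is fixed and the stochastic subgradients $\hat u(x_i,\xi^i_j)$ satisfy the two oracle assumptions (unbiasedness of $\Ex[\hat u(x_i,\xi^i_j)\mid x_i]=u_i\in\partial f(x_i)$, with $\|u_i\|\le L$ by Lipschitz continuity, and variance bounded by $\sigma^2$).

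Applying Lemma~\ref{lemma:CSGproperties}\ref{lemma3_i} to this conditional distribution then yields
\begin{equation*}
\Ex\bigl[\|\tilde u_i\|^2\,\big\vert\, x_1,\dots,x_i\bigr]\ \le\ \frac{\sigma^2}{m} + L^2\qquad\text{a.s.}
\end{equation*}
Multiplying by the nonnegative deterministic weight $w_i\gamma_i$ gives $\nu_i\le w_i\gamma_i\bigl(\sigma^2/m+L^2\bigr)$ almost surely, and summing from $i=1$ to $k$ produces the stated bound. No concentration argument is required here since the term $\nu_i$ is already the conditional expectation, so the only step is the pointwise application of the second-moment bound from Lemma~\ref{lemma:CSGproperties}. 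There is no genuine obstacle in this proof; the only thing to be careful about is the (standard) justification that the oracle assumptions transfer to the conditional distribution given the past, which is what makes the bound on $\Ex\|\tilde u\|^2$ in Lemma~\ref{lemma:CSGproperties} applicable in the conditional form used above.
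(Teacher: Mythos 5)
Your proof is correct and is exactly the argument the paper intends (the paper omits an explicit proof of Proposition~\ref{prop:AnalysisD}, but it uses the same conditional application of the second-moment bound of Lemma~\ref{lemma:CSGproperties}\ref{lemma3_i} in the proof of Proposition~\ref{prop:AnalysisC}). Your care in noting that the oracle assumptions transfer to the conditional distribution given $x_1,\dots,x_i$ is the only nontrivial point, and you handle it correctly.
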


{\bf Proof of Theorem~\ref{thm:main}.}
By a simple union bound, it follows from Proposition~\ref{prop:AnalysisA} and Proposition~\ref{prop:AnalysisC} that with probability at least $1 - \delta$ we have 
\begin{align*}
\sum_{\iter=1}^k \theta_i^v + \frac 1 2 \sum_{\iter=1}^\Iter \zeta_i  
&\leq
\frac{2}{3} \bigg(2 D \cdot \max_{1 \leq \iter \leq \Iter} w_\iter \lambda_\iter + \max_{1 \leq \iter \leq \Iter} w_\iter \gamma_\iter \lambda^2_\iter \bigg) \log \bigg(\frac 2 \delta\bigg)\\
&+ \frac{1}{\sqrt{2}}\bigg(4 \bigg(1 + \frac 1 \varepsilon \bigg) \frac{D \sigma}{\sqrt{m}} \sqrt{\sum_{\iter=1}^\Iter 
w_\iter^2} + \sqrt{\bigg(\frac{\sigma^2}{m} + L^2 \bigg) \sum_{\iter=1}^\Iter w_\iter^2 \gamma_\iter^2 \lambda_\iter^2} \bigg)\sqrt{\log\left(\frac{2}{\delta}\right)}.
\end{align*}
Since the bounds on the terms {\bf B} and {\bf D} hold almost surely, the statement follows directly by plugging the above bound,
and those given in Proposition~\ref{prop:AnalysisB} and Proposition~\ref{prop:AnalysisD} into the inequality of  Lemma~\ref{lemma:ErrorDecomp}.
\qed

\subsection{Part 2}
In this section we provide the proof of Theorem~\ref{thm:main2}.
We start with the following lemmas whose proofs are provided in the Appendix~\ref{appB}.
\begin{lemma}
\label{lem:harmonicsums}
Let $p \in \R$. Then
\begin{equation*}
\begin{cases}
\displaystyle
\frac{k^{p+1}}{(p+1)\vee 1} \leq \sum_{i=1}^k i^p \leq 
\frac{k^{p+1}}{(p+1)\wedge 1} &\text{if } p >-1\\[1ex]
\displaystyle
\frac 1 2 + \log k \leq \sum_{i=1}^k i^p \leq 1 + \log k &\text{if } p =-1\\[1ex]
\displaystyle
1 \leq \sum_{i=1}^k i^p \leq \frac{p}{p+1}&\text{if } p <-1 
\end{cases}
\end{equation*}
\end{lemma}
\begin{lemma}
\label{lem:maxseq}
Let $b,c>0$ and $t,s \in \R$. Then, for every integer $k \geq 1$, we have
\begin{equation*}
\max_{1 \leq i \leq k} \max\{b\, i^t, c\, i^s\}
= \max\{b\, k^{t_+}, c\, k^{s_+}\}.
\end{equation*}
\end{lemma}
\begin{lemma}
\label{lem:summax}
Let $b,c>0$ and $t,s \in \R$ with $t>s$. Then, for every integer $k \geq 1$, we have
\begin{equation*}
\sum_{i=1}^k \max\{b\, i^t, c\, i^s\}
\leq
\begin{cases}
\displaystyle
\bigg(\frac{1}{(s+1)\wedge 1} + \frac{1}{(t+1)\wedge 1}\bigg) \max\bigg\{b, \frac{c}{k^{t-s}} \bigg\} \cdot k^{t+1}&\text{if } -1<s<t\\[3ex]
\displaystyle
\frac{k^{t+1}}{(t+1)\wedge 1} \cdot
\begin{cases}
c  &\text{if } k< \left(\dfrac c b\right)^{\frac{1}{t-s}} \\[2ex]
b  \bigg( \dfrac{(c/b)^{2}}{k^{t-s}} +1 \bigg) &\text{if }  k \geq \left(\dfrac c b\right)^{\frac{1}{t-s}}
\end{cases} &\text{if } -1=s<t\\[7ex]
\displaystyle
\frac{c\, s}{s+1} + \max\bigg\{b, \frac{c}{k^{t-s}} \bigg\} \frac{k^{t+1}}{(t+1)\wedge 1} &\text{if } s<-1<t\\[3ex]
\displaystyle
\frac{c\,s}{s+1} + b(1 + \log k) &\text{if } s<t=-1\\[3ex]
\displaystyle
\frac{c\,s}{s+1} + \frac{b\,t}{t+1} &\text{if } s<t<1\\[3ex]
\end{cases}
\end{equation*}
\end{lemma}
With the help of the above results it is easy to conduct
the analysis of the terms in \eqref{20230822a} and \eqref{20230822b}. First of all, according to Theorem~\ref{thm:main}, in order to ensure that
$(w_k/\gamma_k)_{k \in \N}$ is increasing, we have $p+r\geq 0$.
Concerning the first quantity in \eqref{20230822b}, it follows from Lemma~\ref{lem:harmonicsums} that
\begin{equation}
\label{eq:term1}
\circled{4} =\frac{\sqrt{\sum_{i=1}^k w_i^2}}{\sum_{i=1}^k w_i} \leq
\begin{cases}
\displaystyle
\frac{(p+1)\vee 1}{(2p+1)\wedge 1}\cdot\frac{1}{\sqrt{k}}
&\text{if } -1/2 <p\\[3ex]
\displaystyle
((p+1)\vee 1)\cdot\frac{\sqrt{1+\log k}}{\sqrt{k}} &\text{if } p=-1/2\\[2ex]
\displaystyle
\sqrt{\frac{2p}{2p+1}} \cdot \frac{1}{k^{p+1}}&\text{if } -1<p<-1/2\\[2ex]
\displaystyle
\frac{\sqrt{2}}{\log k + 1/2}&\text{if } p=-1\\[2ex]
\displaystyle
\frac{2p}{2p+1}&\text{if } p<-1.
\end{cases}
\end{equation}
This implies that the left hand side will converge to zero provided that $p\geq -1$. In particular, if $p>-1/2$ we have the best possible rate of $O(1/\sqrt{k})$. In the following we will assume $p>-1$ (since for $p=1$ we have a slow rate, not even polynomial).

Next we address the last two terms in \eqref{20230822b}. We have
\begin{equation}
\label{eq:term2}
    \circled{7} =\frac{\sum_{i=1}^k w_i \gamma_i}{\sum_{i=1}^k w_i}
    \leq \gamma ((p+1)\vee 1)\cdot
    \begin{cases}
    \displaystyle
    \frac{1}{(p-r+1)\wedge 1}\cdot \frac{1}{k^r}
    &\text{if } p-r>-1\\[3ex]
    \displaystyle
     \frac{1+\log k}{k^{r}} &\text{if } p-r=-1\\[2ex]
    \displaystyle
    \frac{p-r}{p-r+1}\cdot \frac{1}{k^{p+1}}
    &\text{if } p-r<-1
    \end{cases}
\end{equation}
and
\begin{equation}
\label{eq:term3}
    \circled{6} =\frac{\sum_{i=1}^k w_i/\lambda_i}{\sum_{i=1}^k w_i}
    \leq \frac{(p+1)\vee 1}{\beta}\cdot
    \begin{cases}
    \displaystyle
    \frac{1}{(p-q+1) \wedge 1}\cdot \frac{1}{k^q}
    &\text{if } p-q>-1\\[3ex]
    \displaystyle
     \frac{1+\log k}{k^{q}} &\text{if } p-q=-1\\[2ex]
    \displaystyle
    \frac{p-q}{p-q+1}\cdot \frac{1}{k^{q+1}}
    &\text{if } p-q<-1.
    \end{cases}
\end{equation}
Since we assumed $p>-1$, from the previous bounds, it is easy to see that, in order to have both terms converging to zero we should require in every cases that
$r>0$ and $q>0$.
Now we consider the three terms in \eqref{20230822a}. Concerning the first one we have
\begin{equation}
\label{eq:term4}
    \circled{1} =\frac{w_k/\gamma_k}{\sum_{i=1}^k w_i} \leq \frac{(p+1)\vee 1}{\gamma}\cdot \frac{1}{k^{1-r}}.
\end{equation}
This shows that necessarily $r<1$.
Instead for the other two quantities, recalling Lemma~\ref{lem:maxseq}, we have
\begin{equation}
\label{eq:term5}
    \circled{2} =\frac{\max_{1\leq i \leq k} w_i \lambda_i}{\sum_{i=1}^k w_i} \leq \frac{(p+1)\vee 1}{k^{\min\{p+1,1-q\}}} \cdot
    \max\bigg\{ \beta, \frac{(1+\varepsilon) L}{k^{\min\{(p+q)_+,q\}}}\bigg\}.
\end{equation}
and
\begin{equation}
\label{eq:term6}
    \circled{3} =\frac{\max_{1\leq i \leq k} w_i \gamma_i \lambda_i^2}{\sum_{i=1}^k w_i} \leq \frac{\gamma((p+1)\vee 1)}{k^{\min\{p+1,r+1-2q\}}} \cdot
    \max\bigg\{ \beta^2, \frac{L^2_\varepsilon}{k^{\min\{(2q+p-r)_+,2q\}}}\bigg\}.
\end{equation}
From the above bounds, since $p+1 > 0$, it is clear that
in order to have convergence to zero we should impose that
\begin{equation*}
    1-q>0\quad \text{and}\quad r+1-2q>0.
\end{equation*}
Overall, up to now, the convergence of the considered terms is ensured if
\begin{equation*}
p>-r,\ r \in \left]0,1\right[,\ q \in \left]0,1\right[,\ \text{and}\ q<\frac{r+1}{2}.
\end{equation*}
Moreover, from \eqref{eq:term2} and \eqref{eq:term4} it follows that the optimal choice of $r$ in term of rate of convergence is $r=1/2$. Finally we consider the second term in \eqref{20230822b}. Recalling Lemma~\ref{lem:summax}, and setting for the sake of brevity 
\begin{equation*}
L_\varepsilon = L_\varepsilon,\quad
A_{\varepsilon, \beta} = \frac{L_\varepsilon}{\beta}\quad\text{and}\quad B_{p,r} = \frac{(2p-2r)}{2p-2r+1},
\end{equation*}
 we have
\begin{equation}
\sum_{i=1}^k w_i^2 \gamma_i^2 \lambda_i^2
\leq \gamma^2\cdot
\begin{cases}
\displaystyle
\frac{2}{(2p-2r+1)\wedge 1} 
\max\bigg\{\beta, \frac{L_\varepsilon}{k^{q}} \bigg\}^2 \cdot k^{2p-2r+2q+1}&\text{if } -\frac 1 2<p-r\\[3ex]
\displaystyle
\frac{k^{2q}}{(2p\!-\!2r\!+\!2q\!+1)\wedge 1} \cdot
\begin{cases}
L^2_\varepsilon  &\text{if } k< A_{\varepsilon,\beta}^{1/q} \\[2ex]
\beta^2  \bigg(  \dfrac{A_{\varepsilon,\beta}^{4}}{k^{2q}} \!\!\! +1 \bigg) &\text{if }  k \geq A_{\varepsilon,\beta}^{1/q}
\end{cases} &\text{if } - \frac 1 2=p-r\\[7ex]
\displaystyle
L^2_\varepsilon B_{p,r} + \max\bigg\{\beta, \frac{L_\varepsilon}{k^{q}} \bigg\}^2 \frac{k^{2p-2r+2q+1}}{(2p-2r+2q+1)\wedge 1} &\text{if }\ \substack{\displaystyle p-r<-\frac 1 2\\[1ex]\displaystyle<p-r+q}\\[5ex]
\displaystyle
L^2_\varepsilon B_{p,r} + \beta^2(1 + \log k) &\text{if } p-r +q=-\frac 1 2\\[3ex]
\displaystyle
L^2_\varepsilon B_{p,r} + \frac{\beta^2(2p-2r+2q)}{2p-2r+2q+1} &\text{if } p-r+q<- \frac 1 2\\[3ex]
\end{cases}
\end{equation}
and hence
\begin{multline*}
\circled{5} =\frac{\sqrt{\sum_{i=1}^k w_i^2 \gamma_i^2 \lambda_i^2}}{\sum_{i=1}^k w_i} \leq
\gamma\cdot ((p+1)\vee 1)\\[2ex]
\times
\begin{cases}
\displaystyle
\sqrt{\frac{2}{(2p-2r+1)\wedge 1}} 
\max\bigg\{\beta, \frac{L_\varepsilon}{k^{q}} \bigg\} \cdot \frac{1}{k^{r+\frac 1 2 -q}} &\text{if } -\frac 1 2<p-r\\[3ex]
\displaystyle
\frac{1}{\sqrt{(2q)\wedge 1}} \cdot \frac{1}{k^{r+\frac 1 2 -q}} \cdot
\begin{cases}
(1+\varepsilon) L  &\text{if } k< A_{\varepsilon,\beta}^{1/q} \\[2ex]
\beta  \bigg[ \dfrac{A^2_{\varepsilon,\beta}}{k^q} \!\!\! +1 \bigg] &\text{if }  k \geq A_{\varepsilon,\beta}^{1/q}
\end{cases} &\text{if } - \frac 1 2=p-r\\[7ex]
\displaystyle
\bigg(\frac{L_\varepsilon\sqrt{B_{p,r}}}{k^{p+q-r+\frac 1 2}} + \max\bigg\{\beta, \frac{L_\varepsilon}{k^{q}} \bigg\} \frac{1}{\sqrt{(2p-2r+2q+1)\wedge 1}}\bigg) \frac{1}{k^{r+\frac 1 2 -q}} &\text{if }\ \substack{\displaystyle p-r<-\frac 1 2\\[1ex]\displaystyle<p-r+q}\\[5ex]
\displaystyle
\big(L_\varepsilon\sqrt{B_{p,r}} + \beta \sqrt{1 + \log k}\big)\cdot\frac{1}{k^{r+\frac 1 2 -q}} &\text{if } p-r +q=-\frac 1 2\\[3ex]
\displaystyle
\bigg(L_\varepsilon\sqrt{B_{p,r}} + \beta \sqrt{\frac{2p-2r+2q}{2p-2r+2q+1}}\bigg)\cdot \frac{1}{k^{p+1}} &\text{if } p-r+q<- \frac 1 2\\[3ex]
\end{cases}
\end{multline*}
This bound shows that convergence is guaranteed if $r+1/2 - q>0$, that is $q<r+1/2$. Taking into account that $r>0$, this provide a weaker condition than the condition $q<(r+1)/2$ already obtained. 
Moreover, with the optimal choice of $r=1/2$, \circled{5}
features a rate of $1/k^{1-q}$, which, considering that $q>0$, is optimaized when $q=1/2$. In the end Theorem~\ref{thm:main2}\ref{thm:main2_i}\ref{thm:main2_ii} follows by simply plugging the above bounds with $q=r=1/2$ in Theorem~\ref{thm:main}. Finally concerning Theorem~\ref{thm:main2}\ref{thm:main2_iii}, we note that if we take the stepsizes constant till $k$, that is $(\gamma_i)_{1 \leq i \leq k} \equiv \gamma$ (so that $r=0$), then we should ask for $p\geq 0$ and the bounds for \circled{1}, \circled{3}, \circled{5}, and \circled{7} (with $q=1/2$) become
\begin{align*}
\circled{1} &
\leq \frac{p+1}{\gamma}\cdot \frac{1}{k},\quad \circled{3} \leq \gamma (p+1)\cdot \max\bigg\{\beta^2, \frac{L^2_\varepsilon}{k}\bigg\}\\[1ex] 
\circled{5} & \leq \gamma (p+1)\sqrt{2} 
\max\bigg\{\beta, \frac{L_\varepsilon}{\sqrt{k}}\bigg\} \frac{1}{\sqrt{k}},\quad
\circled{7} \leq \gamma (p+1),
\end{align*}
Thus, if we choose $(\gamma_i)_{1 \leq i \leq k} \equiv \gamma/\sqrt{k}$ and substitute the bounds in Theorem~\ref{thm:main}, the statement of Theorem~\ref{thm:main2}\ref{thm:main2_iii} follows.

\section{Application to Kernel Methods}
\label{sec:kenels}
In this section we present an application of \Cref{algo:projsub2} to the case of kernel methods
and show that the clipping strategy can be implemented in this setting by updating finite dimensional variables.
This resembles the classical kernel trick which is common in empirical risk minimization \cite{Steinwart2008} and in SGD as well \cite{Dieuleveut2016,Lin2016a,Lin2016b}, Here we adjust the method so to handle the nonlinearity of the clipping operation. In passing we note that our weak assumption on the noise allows to treat unbounded kernels as the polynomial kernel and contrasts with the common bounded kernel assumption \cite{Caponnetto2007,Steinwart2009}. 

The problem we address is formulated as follows
\begin{equation*}
\label{eq:statlearnprobl}
\min_{x \in B_r} R(x) \coloneqq \Ex[\ell(\langle x, \Phi(Z) \rangle, Y)],
\end{equation*}
where $Z$ and $Y$ are random variables with values in the measurable spaces $\cZ$ and $\Y$ respectively, with joint distribution $\mu$;
$H$ is a Hilbert space with scalar product $\langle \cdot, \cdot \rangle$ and $B_r$ is the ball of radius $r > 0$ centered at the origin of $H$. We assume the \emph{loss function} $\ell: \mathbb{R} \times \Y \rightarrow \R_+$ to be convex and $L$-Lipschitz in the first argument (for every fixed value of the second argument) and we assume that $\Ex[\ell(0, Y)]<+\infty$. The function 
$\Phi: \cZ \rightarrow H$ is the {\it feature map} and $K(\cdot,\cdot) = \langle \Phi(\cdot), \Phi(\cdot) \rangle$ is the corresponding kernel. We further assume that $\Ex[\norm{\Phi(Z)}^2]<+\infty$. The goal is to find a function $\cZ \ni z \mapsto \langle x, \Phi(z)\rangle \in \R$, with $x \in H$, 
 which minimizes the expected risk above over the ball $B_r$, based on the possibility of sampling from the distribution $\mu$.

\paragraph{Derivation} Assume $x_0=0$ and recall that, for each $k \geq 0$, the main update in \Cref{algo:projsub2} is defined by the following
\begin{equation}
x_{k+1} = P_{B_r}(x_k - \gamma_k \tilde{u}_k),
\label{eq:clipped-SsGM}
\end{equation}
where $\tilde{u}_k = \textsc{CLIP}(\bar{u}_k,\lambda_k)$, $\bar{u}_k = \frac{1}{m} \sum_{j=1}^m \hat{u}(x_k, (Z_j^k, Y_j^k))$, $\hat{u}(x, (Z, Y)) = \ell'(\langle x, \Phi(Z) \rangle, Y) \Phi(Z)$, where, for all $(t,y) \in \mathbb{R}\times \Y$, $\ell'(t,y)$ is a subgradient of $\ell(\cdot, y)$ at $t$, that is, $\ell'(t,y) \in \partial\ell(t, y)$. Since $H$ may be infinite dimensional, computing $\tilde{u}_k$ with a straightforward application of its definition may be problematic.

In order to solve the aforementioned issue, the algorithm will keep an implicit representation of the iterates $x_k$'s in terms of the kernels. To obtain this representation, we start by noticing that since $P_{B_r}$ is the projection in the ball centered at the origin, then
\begin{equation}
\label{eq:kernel_up}
x_{k+1} = \min \left\{\frac{r}{\|x_k-\gamma_k \tilde{u}_k\|}, 1 \right\} (x_k - \gamma_k \tilde{u}_k) = \delta_k (x_k - \gamma_k \tilde{u}_k),
\end{equation}
where we set $\delta_k = \min \left\{\frac{r}{\|x_k-\gamma_k \tilde{u}_k\|}, 1 \right\}$. Similarly by using the definitions of clipping and $\hat{u}(x, (Z, Y))$ it holds that
\begin{equation}
\label{eq:kernel_clip}
\tilde{u}_{k} = \min \left\{\frac{\lambda_k}{\|\bar{u}_k\|}, 1 \right\} \bar{u}_k = \frac{\rho_k}{m} \sum_{j=1}^m \hat{u}(x_k, (Z_j^k, Y_j^k)) = \frac{\rho_k}{m} \sum_{j=1}^m \alpha_j^k \Phi(Z_j^k),
\end{equation}
where we set $\rho_k = \min \left\{\frac{\lambda_k}{\|\bar{u}_k\|}, 1 \right\}$ and $\alpha_j^k = \ell'(\langle x_k, Z_j^k \rangle , Y_j^k)$. From equation \eqref{eq:kernel_clip} we have
\begin{equation}
\|\bar{u}_k\|^2 = \frac{1}{m^2} \sum_{j,j'=1}^m \alpha_j^k \alpha_{j'}^k K(Z_j^k, Z_{j'}^k).
\end{equation}
The above equation allows for the computation of $\rho_k$ once the $\alpha_j^k$s are known. Furthermore, combing  \Cref{eq:kernel_clip} and \Cref{eq:kernel_up} we obtain that
\begin{align}
\label{eq:kernel_wk}
x_{k+1} &= \delta_k (x_k - \gamma_k \tilde{u}_k) = \delta_k \left(x_k - \gamma_k \frac{\rho_k}{n} \sum_{j=1}^n \alpha_j^k\Phi(Z_j^k)\right) = \delta_k x_k - \frac{\delta_k\gamma_k\rho_k}{n} \sum_{j=1}^n \alpha_j^k\Phi(Z_j^k) \nonumber\\
&= \delta_k x_k + \sum_{j=1}^n \left(- \frac{\delta_k\gamma_k\rho_k}{n} \right) \alpha_j^k\Phi(Z_j^k) = \sum_{i=0}^{k} \sum_{j=1}^n a_{ij}^k \Phi(Z_j^i) \:,
\end{align}
where we set
\begin{equation}
a_{ij}^k = \begin{cases}
\delta_k a_{ij}^{k-1} & \text{ if } i \leq k-1,\\[1ex]
-\dfrac{\delta_k\gamma_k\rho_k}{n} \alpha_j^k & \text{ if } i = k.
\end{cases}    
\end{equation}
As a consequence
\begin{align*}
\|x_{k} - \gamma_k \tilde{u}_k\|^2 &= \|x_k\|^2 + \gamma_k^2 \|\tilde{u}_k\|^2 - 2 \gamma_k \langle x_k, \tilde{u}_k \rangle \\[1ex]
&= \|x_k\|^2 + \gamma_k^2 \rho_k^2 \|\bar{u}_k\|^2 - 2 \gamma_k \rho_k \langle x_k, \bar{u}_k \rangle \\
&= \|x_k\|^2 + \gamma_k^2 \rho_k^2 \|\bar{u}_k\|^2 - 2 \frac{\gamma_k \rho_k}{n} \langle x_k, \sum_{j'=1}^n \alpha_{j'}^k \Phi(Z_{j'}^k) \rangle \\
&= \|x_k\|^2 + \gamma_k^2 \rho_k^2 \|\bar{u}_k\|^2 - 2 \frac{\gamma_k \rho_k}{n} \sum_{i=0}^{k-1}\sum_{j=0}^n \sum_{j'=1}^n a_{ij}^{k-1} \alpha_{j'}^k K(Z_j^i, Z_{j'}^k). 
\end{align*}
Note that the above equation allows for the computation of $\delta_k$ once the $\alpha_j^k$s (and then the $a_{ij}^k$s) are known. Replacing the expression of $x_k$ from \eqref{eq:kernel_wk} in the definition of $\alpha_j^k$ leads to
\begin{align}
\alpha_j^k = \ell'(\langle x_k, \Phi(Z_j^k)\rangle, Y_j^k) = \ell' \left(\sum_{i=0}^{k-1} \sum_{j'=1}^m a_{ij'}^{k-1} K(Z_{j'}^i, Z_j^k), Y_j^k\right),
\end{align}
which can be computed directly using the kernels.

\paragraph{Algorithm} As mentioned in the previous paragraph, the algorithm keeps an implicit representation for the iterates $x_k$ which is computed as follows. This is enough to make predictions on new points as we are going to show in the following. We notice that, at any time $k$, it is possible to make a prediction for an instance $Z$ using the $k$-th iterate as follows
\begin{align}
\label{eq:kernel_pred}
\langle x_{k+1}, \Phi(Z) \rangle &= \delta_{k} \langle x_{k}, \Phi(Z) \rangle - \frac{\delta_k\gamma_k\rho_k}{m} \sum_{j=1}^m \alpha_j^k K(Z_j^k, Z).
\end{align}
This prediction requires nothing but the prediction made with the previous iterate, $\delta_k, \rho_k, \alpha_j^k$s and the values of the kernels $K(Z_j^k,Z)$. It is easy to observe that, by recursion, there is no need to have an explicit expression for the $x_k$; instead it is enough to compute, along the iterations, only $\delta_k, \rho_k, a_{ij}^k$ and $\alpha_j^k$. The algorithm to compute the iterates $x_k$ in the implicit form is given in \Cref{algo:kernelSsGM}. Now, since we have developed the theory for weighted average schemes, let $(w_k)_{k \in \N}$ a sequence of non-negative weight, and let define
\begin{equation}
\label{eq:recweights}
W_{k+1} = \begin{cases}
w_1, & \text{ if } k=0, \\
W_k + w_{k+1} & \text{ otherwise}.
\end{cases}
\end{equation}
Then, by considering $\bar{x}_k$ as defined in \Cref{algo:projsub2}, the prediction  can be computed as
\begin{align}
\label{eq:kernel_w_pred}
\langle \bar{x}_{k+1}, \Phi(Z) \rangle &= \frac{1}{W_{k+1}}\left(W_k \langle \bar{x}_{k}, \Phi(Z) \rangle + \langle x_{k+1}, \Phi(Z) \rangle \right).
\end{align}

\begin{remark}
Notice that the prediction made with the $k+1$-th iterate in \Cref{eq:kernel_pred} can be computed recursively from the prediction made by the $k$-th iterate. Similarly, the prediction made by the $k+1$-th weighted average in \Cref{eq:kernel_w_pred} can be computed by the prediction made by the previous weighted average. Both there recursion allows for significant computational saving, as at each step it only necessary to compute the kernels among $Z$ and the instances of the $k$-th batch.
\end{remark}

\begin{remark}
We note that in this setting
\begin{align}
\label{eq:kernel_var}
\mathbb{E}[\|\hat{u}(x, (Z,Y)) - \mathbb{E}[\hat{u}(x, (Z,Y))]\|^2] &\leq \mathbb{E}[\|\hat{u}(x, (Z,Y))\|^2] \nonumber \\
&= \mathbb{E}[|\ell'(\langle x, Z \rangle,Y)|^2 \|\Phi(Z)\|^2] \nonumber \\
&\leq L^2 \mathcal{K}
\end{align}
where, $\mathcal{K} \coloneqq \mathbb{E}[K(Z, Z)]$ and the last line follows from the Lipschitzness of the loss. The important consequence of \Cref{eq:kernel_var} is that in order to obtain a convergence rate it is enough to assume that the expected value of the kernel is bounded.
\end{remark}

\begin{algorithm}
\caption{Kernel Clipped Stochastic subGradient Method}
Given the step-sizes $(\gamma_k)_{k \in \N} \in \R_{++}^\N$, the weights $(w_k)_{k \in \N} \in \R_{++}^\N$, the clipping levels $(\lambda_k)_{k \in \N} \in \R_{++}^\N$, the batch size $m \in \N$, $m\geq 1$, and an initial point $x_0 = 0$, do the following.
\begin{equation}
\label{eq:loop}
\begin{array}{l}
\nonumber
\textsc{Initialization}(\gamma_0,\lambda_0,m)\\[1ex]    
\text{for}\;k=1,\ldots\\[1ex]
\left\lfloor
\begin{array}{l}
\text{draw } (Z_{j}^k, Y_j^k)_{1\leq j \leq m} \ m \text{ independent copies of } (Z,Y),\\[1ex]
\text{pick } \alpha_j^k = \ell' \left( \sum_{i=0}^{k-1}\sum_{j'=1}^{m} a_{ij'}^{k-1} K(Z_{j'}^i, Z_j^k), Y_j^k \right), \text{ for each } 1 \leq j \leq m,\\[1ex]
\text{compute}\\[1ex]
\left\lfloor
\begin{array}{l}
\|\bar{u}_k\|^2 = \frac{1}{n^2} \sum_{j,j' = 1}^{m} \alpha_j^k \alpha_{j'}^k K(Z_j^k, Z_{j'}^k),\\[1ex]
\rho_k = \min\left\{\frac{\lambda_k}{\|\bar{u}_k\|}, 1\right\},\\[1ex]
\|x_k - \gamma_k \tilde{u}_k\|^2 = \|x_k\|^2 + \gamma_k^2\rho_k^2 \|\bar{u}_k\|^2 - 2 \frac{\gamma_k\rho_k}{m} \sum_{i=0}^{k-1}\sum_{j,j'=1}^{m} a_{ij}^{k-1} \alpha_{j'}^k K(Z_j^i, Z_{j'}^k),\\[1ex]
\delta_k = \min\left\{\frac{r}{\gamma_k\rho_k\|x_k - \gamma_k \tilde{u}_k\|}, 1\right\},\\[1ex]
a_{ij}^k = 
\begin{cases}
\delta_k a_{ij}^{k-1}, & \text{ if } i \leq k-1, \\
-\frac{\delta_k\gamma_k\rho_k}{m} \alpha_{j}^k & \text{ otherwise}.
\end{cases}
, 1 \leq j \leq m,\\[1ex]
\|x_{k+1}\| = \delta_k \|x_k - \gamma_k \tilde{u}_k\|,\\[1ex]
\end{array}
\right.
\end{array}
\right.
\end{array}
\end{equation}
From the sequence $(x_k)_{k \in \N}$ one defines also
\begin{equation}
(\forall\, k \in \N)\qquad\bar{x}_k = \bigg( \sum_{\iter=1}^k w_i\bigg)^{-1} \sum_{\iter=1}^k w_i x_i.
\end{equation}
\label{algo:kernelSsGM}
\end{algorithm}

\begin{algorithm}
\caption{\textsc{Initialization}}
Given the step-size $\gamma$, clipping level $\lambda$ and the batch size $m \in \N$, $m\geq 1$,
\begin{equation}
\label{eq:init1}
\begin{array}{l}
\text{draw } (Z_{j}^0, Y_j^0)_{1\leq j \leq m} \ m \text{ independent copies of } (Z,Y),\\[1ex]
\text{pick } \alpha_j^0 = \ell' (0, Y_j^0), \text{ for each } 1 \leq j \leq m,\\[1ex]
\text{compute}\\[1ex]
\left\lfloor
\begin{array}{l}
\|\bar{u}_0\|^2 = \frac{1}{n^2} \sum_{j,j' = 1}^{m} \alpha_j^0 \alpha_{j'}^0 K(Z_j^0, Z_{j'}^0),\\[1ex]
\rho_0 = \min\left\{\frac{\lambda_0}{\|\bar{u}_0\|}, 1\right\},\\[1ex]
\delta_0 = \min\left\{\frac{r}{\gamma_0\rho_0\|\bar{u}_0\|}, 1\right\},\\[1ex]
a_{0,j}^0 = -\frac{\delta_0 \gamma_0 \rho_0}{m} \alpha_j^0, 1 \leq j \leq m,\\[1ex]
\|x_1\| = \gamma_0 \delta_0\rho_0\|\bar{u}_0\|.
\end{array}
\right.
\end{array}
\end{equation}
\label{algo:init}
\end{algorithm}

\section{Numerical Experiments}
\label{sec:experiments}
In this section we present four experiments on synthetic problems, in order to show:
\begin{itemize}
\item  a comparison between Clipped-SsGM and standard SsGM under 
heavy-tailed and light-tailed noise;
\item a comparison between different averaging schemes in infinite and finite-horizon settings for Clipped-SsGM;
\item the advantages and practicability of our fully flexible parameter setting over those prescribed in \cite{Gorbunov2021};
\item a preliminary experiment on the kernel-based method to learn a non linear classifier.
\end{itemize}

\subsection{Comparison with standard SsGM under light and heavy tails noise}\label{sec:ex1} 
The goal of this experiment is to compare the performance of clipped-SsGM against the standard SsGM, under different noise regimes.

We consider the minimization of $f(x)=|x|$ over $[-1/2,1/2]$. The sub-gradient oracle returns $\textsc{sign}(x) + n$ where: $\textsc{sign}(x) = 1$ for $x \leq 0$ and $-1$ otherwise, and $n$ is either a Pareto random variable with shape parameter $2.1$, or a  Gaussian random variables, both with zero-mean and unit variance.
Note that the considered Pareto noise has no moment of order greater than $2.1$ and thus fits well our heavy-tailed noise assumption. We run SsGM and clipped-SsGM for $1000$ iterations. We set the clipping level parameters $\beta$ and $\epsilon$ to $0.01$ and $0.001$ respectively. As for the step-size, we set $\gamma_k = \gamma/\sqrt{k}$, where $\gamma$ is computed via a grid search over $\{0.01,0.05,0.1,0.2,0.3,0.5,0.6,1\}$. We note that among the previous values we have included the optimal values of $\gamma$ for the in-expectation and the high-probability bounds as discussed in \Cref{rmk:in-expctation,rmk:SGnoise}, and item \ref{rmk:20230830_ii} in \Cref{rmk:20230830}. For both algorithms we consider the standard weights $w_k=1$ and batch size $m=1$. We ran both methods 1000 times and collected the average and the $99$th percentile of the optimization error. 
The results, reported in \Cref{fig:ssgm_fail_new}, show that, under Pareto noise, the clipping strategy provides a notable acceleration of the convergence, and at the same time a significant reduction of the upper deviations from the mean (note that the $y$ axis is in $\log$ scale). 
We note that this acceleration occurs even in expectation, which is not quite aligned with the theoretical bounds given in Remark~\ref{rmk:in-expctation}.
Moreover, under Gaussian noise with large variance (\Cref{fig:ssgm_fail_new}, center), clipped-SsGM still performs well against the standard SsGM, despite the fact that the clipping is in principle not needed for the convergence in high probability (see Remark~\ref{rmk:SGnoise}) and ultimately introduces bias.
We believe that, since in the above cases the standard deviation of the noise is large compared to the true subgradients, the reduction of the variance obtained by the clipping strategy helps and may compensate the inherent distortion. 
We also tested the algorithms under a Gaussian noise with smaller variance, i.e., with $\sigma=0.1$. In this case SsGM performs better than C-SsGM as we may expect from the theory. Finally, we note that our choice of $\beta$ corresponds to a clipping level of $(1+\varepsilon)L$ across all the iterations. We checked that this aggressive schedule leads to the best performances.

\begin{figure}
\centering
\includegraphics[scale=0.3]{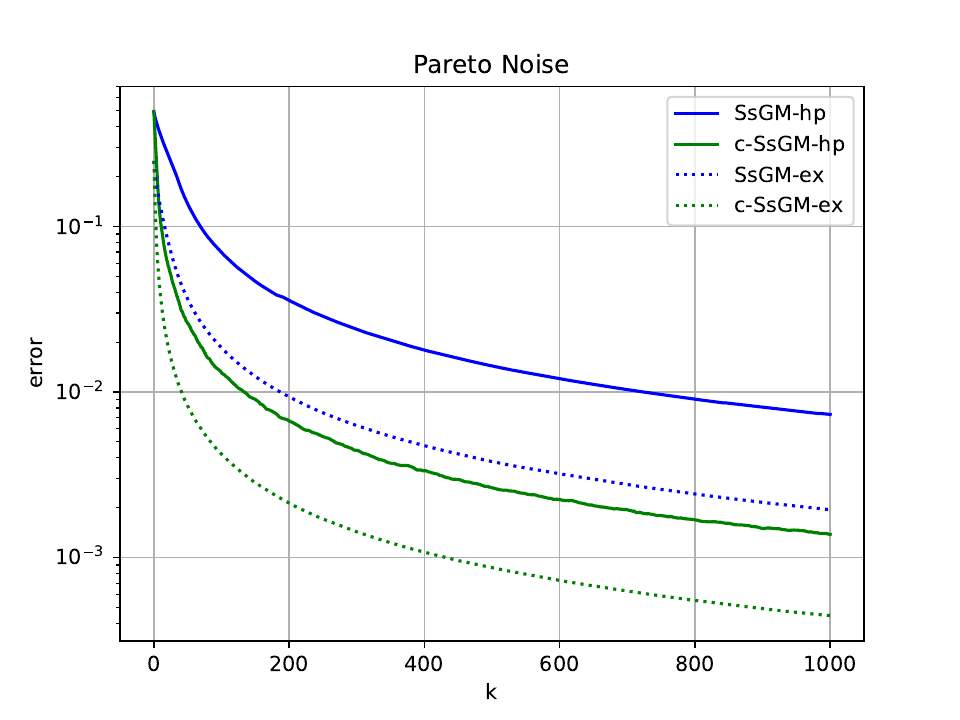}    
\includegraphics[scale=0.3]{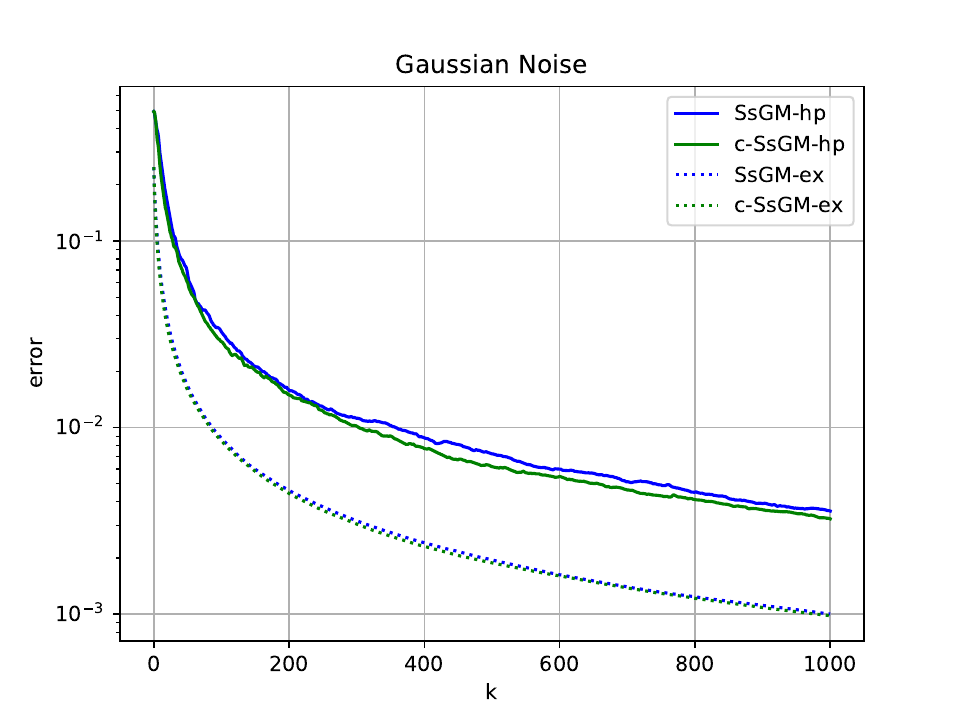}
\includegraphics[scale=0.3]{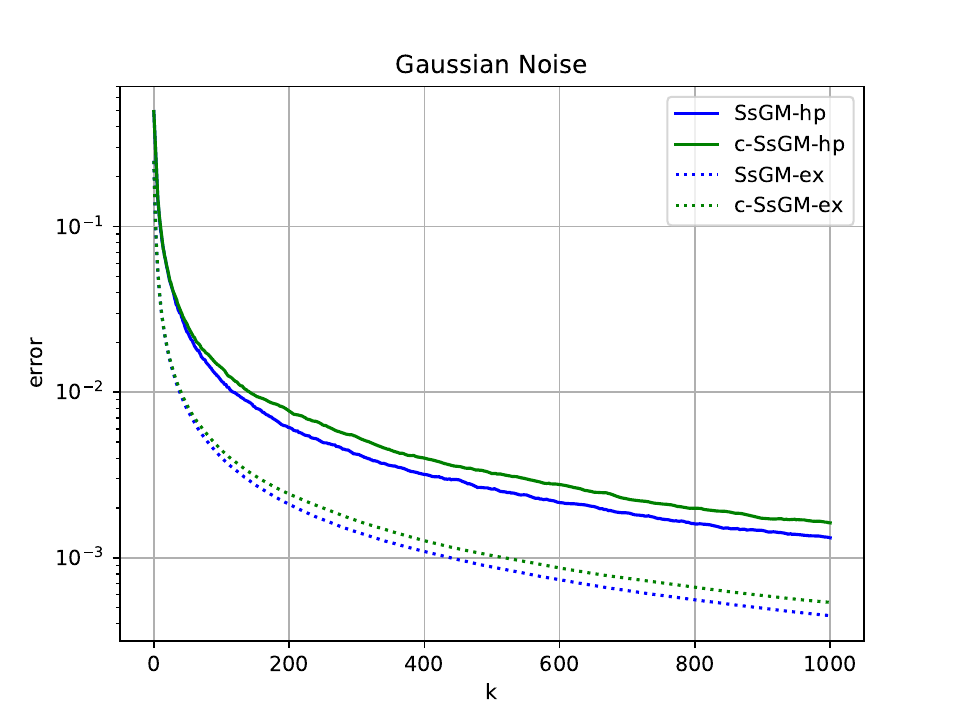}    
\caption{{\rm SsGM} vs {\rm C-SsGM} under different types of noise: Pareto noise {\rm(}left{\rm)}, Gaussian noise with $\sigma= 1$ {\rm(}center{\rm)} and $\sigma=0.1$ {\rm(}right{\rm). The curves show the $99$th percentile and the average of the optimization error over $1000$ repetitions}. 
}
\label{fig:ssgm_fail_new}
\end{figure}

\subsection{Comparison of iteration averaging schemes}\label{exp2}
The goal of this experiment is to compare the performances of the different C-SsGM schemes discussed in Section 2.

We consider the minimization of $f(x)=\|x\|_1$ over the $\ell_2$ ball of radius $1$ in $\mathbb{R}^{d}$, with $d=100$, which contains the global minimum $0$ corresponding to $f^*=0$. Notice that the objective is Lipschitz continuous with $L=10$. At each $x$, the oracle's answer is built by first computing a subgradient of $f$ and then adding a zero mean noise vector $n \in \R^d$ with variance $\sigma^2=100$. The noise vector is generated by independently sampling its components from a Pareto distribution with shape parameter $2.1$ with zero mean and unit variance. We test several average schemes for the iterates with $p=-1/2, p=0$, and $p=1/2$ in the infinite horizon setting. We also consider the finite horizon setting and the coordinate-wise variant of clipping described in \cite{Zhang2020} with $p=0$. We set the step-size schedule as $\gamma_i=\gamma/\sqrt{i}$,  the clipping-level as $\lambda_i = \max\{\beta \sqrt{i},(1+\varepsilon)L\}$, and $\varepsilon=0.001$. We perform a grid search for the parameters $\gamma$ and $\beta$ in $\{0.01,0.02,0.03,0.05,0.06,0.1,0.2,0.3\}$ and $\{0.32,0.64,1.28\}$ respectively. In the case of coordinate-wise clipping, the value of $\beta$ is further divided by the Lipschitz constant $10$. We run $100$ times all the algorithms with batch sizes $m=1$ and $m=\sigma$\footnote{We also test the algorithms with batchsize $m=\sigma^2$ but the results are pretty similar to those with $m=\sigma$.}, till $k=1000$ iterations and measure the $99$th percentile of the optimization error.
\begin{figure}
\centering
\includegraphics[scale=0.40]{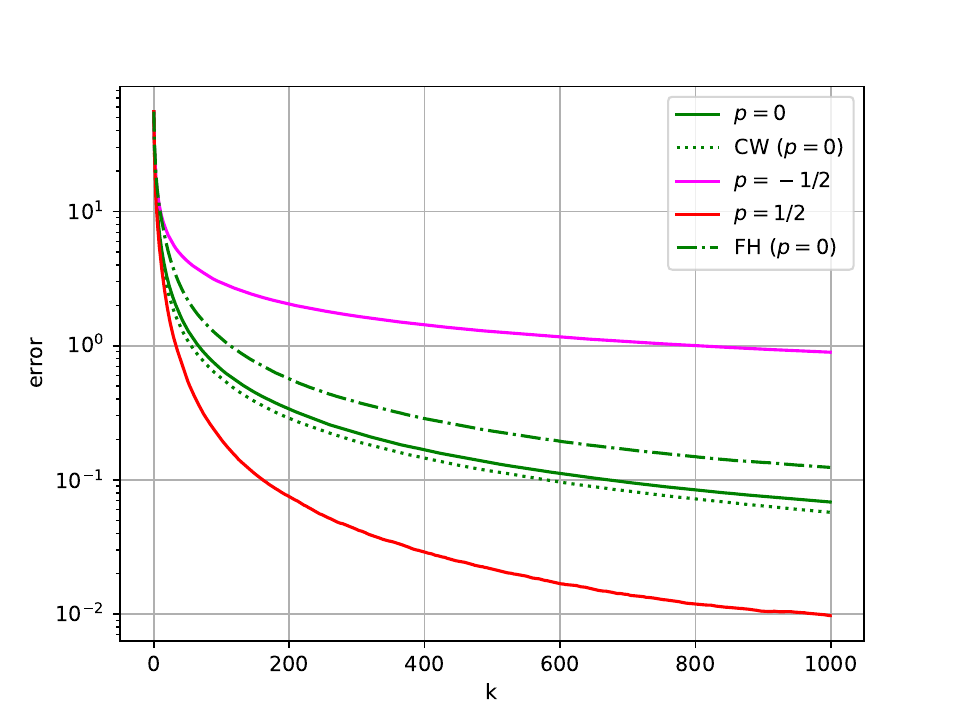}
\includegraphics[scale=0.40]{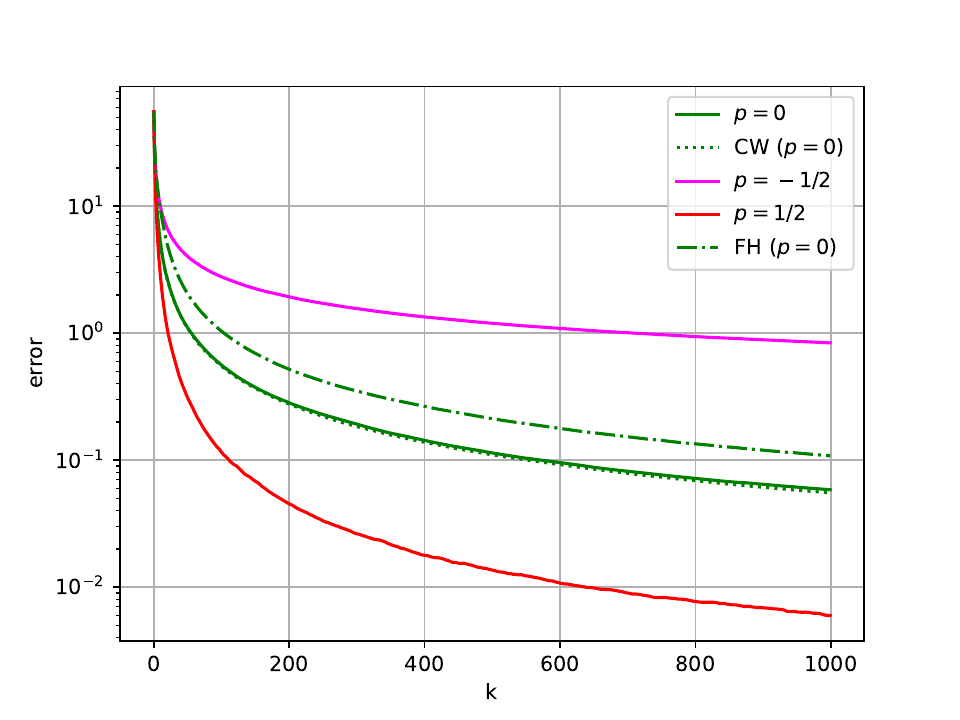}
\caption{Comparison of different averaging schemes for the iterations in infinite and finite-horizon settings with batch sizes $m=1$ (left) and $m=\sigma$ (right). The curves show the $99$th percentile of the optimization error over $100$ repetitions.}
\label{fig:exp2}
\end{figure}
Results are shown in \Cref{fig:exp2} where it is possible to see that: (1) the averaging scheme with $p=1/2$ performs the best, while that with $p=-1/2$ is the worst (see also Remark~\ref{rmk:20240413a}\ref{rmk:20240413a_ii}); (2) the three schemes with $p=0$ performs similarly, with the coordinate-wise variant doing slightly better than the others.

\subsection{Comparison with \cite{Gorbunov2021}} Here we make a comparison with the clipped-SGD algorithm from \cite{Gorbunov2021} on the same problem considered in Section~\ref{exp2}. The algorithm is essentially the same as C-SsGM except for the projection step and the different parameter setting. 
In this respect we note that although in general clipped-SGD cannot handle constraints, it can be used to solve the specific problem at hand, since in this case the constrained and the unconstrained problems share the same solution. Moreover, we consider the finite-horizon scenario with $w_k=1$ which is the same studied in \cite{Gorbunov2021}. For the sake of a fair comparison we run the algorithms with the same batchsize. As for clipped-SGD in \cite{Gorbunov2021}, due to the restrictions on the algorithm's parameters recalled in \eqref{eq:20230830c}-\eqref{eq:20230831a1}-\eqref{eq:20230831a}, by varying $\varepsilon$ one can obtain the following explicit range for the stepsize
\begin{equation}
\label{eq:20230414a}
    \gamma \leq \gamma_{\max}:=D\cdot\min\left\{ 
    \frac{\sqrt{m}}{9 \sigma \sqrt{k \log (4k/\delta)}}, 
    \frac{1}{\sqrt{2k} L},
    \frac{1}{2L \log(4k/\delta)}
    \right\}.
\end{equation}
We perform a grid search on $\gamma$ in $\{\gamma_{\max}/4, \gamma_{\max}/2, \gamma_{\max}\}$. Whereas, for our Clipped-SGM the grid search is done
on the parameters $(\gamma, \beta) \in \{0.1,0.2,0.3\}\times\{0.32, 0.64, 1.28\}$. Note that in our algorithm with finite horizon setting the stepsize is constant and equal to $\gamma/\sqrt{k}$, while the clipping level is possibly varying as $\max\{\beta\sqrt{i}, (1+\varepsilon)L\}=\max\{\beta\sqrt{i}, 10.01\}$. See Theorem~\ref{thm:main2}\ref{thm:main2_iii}.

We report in \Cref{tab:exp3} the $99$th percentile of the optimization error over $100$ repetitions of the algorithms. We also indicate the best parameters chosen via the greed search procedure.
As it is possible to see the parameter setting in \cite{Gorbunov2021} gives tiny values of the stepsizes and quite large values of the clipping levels, which ultimately leads to a noticeable worst performance.

We make a final comment on this comparison. In the numerical section of \cite{Gorbunov2021}, in order to overcome the narrow range of values allowed by the theory, the authors perform a grid-search on the stepsize without the limitation given in \eqref{eq:20230414a}. However, in doing so there is no mathematical guarantee that the selected stepsize leads to a convergent algorithm (according to Theorem~5.1 in \cite{Gorbunov2021}).

\begin{table}[t]
\centering 
\begin{tabular}{|c|c|c|c|c|}
\hline
Method & $m$ & stepsize & clipping level & $99$th-percentile of \\ 
 & & &  & the opt.~error \\[0.1ex] 
\hline
\textsc{C-SsGM} & 
\begin{tabular}{@{}c@{}} 
$1$   \\ 
$10$  \\
$100$ \\ 
\end{tabular} 
& 
\begin{tabular}{@{}c@{}} 
$10^{-2}$ \\ 
$10^{-2}$ \\
$10^{-2}$ \\ 
\end{tabular}
& 
\begin{tabular}{@{}c@{}} 
$10.01$ \\ 
$10.01$ \\
$10.01-20.02$ \\ 
\end{tabular}
&
\begin{tabular}{@{}c@{}} 
$0.124$ \\ 
$0.108$ \\
$0.113$
\end{tabular} 
\\
\hline
\textsc{clipped-SGD}\cite{Gorbunov2021} & 
\begin{tabular}{@{}c@{}} 
$1$  \\ 
$10$ \\
$100$
\end{tabular}  
& 
\begin{tabular}{@{}c@{}} 
$10^{-4}$ \\ 
$3\times 10^{-4}$ \\
$10^{-3}$
\end{tabular}  
&  
\begin{tabular}{@{}c@{}} 
$792.4$ \\ 
$250.6$ \\
$79.2$ \\ 
\end{tabular}
&
\begin{tabular}{@{}c@{}} 
$4.720$ \\ 
$1.570$ \\
$0.463$
\end{tabular} 
\\ 
\hline
\end{tabular}
\vspace{2ex}
\caption{Comparison of C-SsGM with Clipped-SGD from \cite{Gorbunov2021} in the finite-horizon scenario. The parameter $m$ denotes the batchsize. $C-SsGM$
We report the $99$th-percentile of the optimization error over $100$ runs of the experiment.}
\label{tab:exp3}
\end{table}

\subsection{Non-linear classification} We aim at learning a classifier with a low expected classification error when the data generating distribution is a bi-variate standard normal and the target function is the indicator of the first and the third orthants. As a convex surrogate for the classification error we consider the hinge loss $\ell(t,y) \coloneqq \max \{0, 1-ty\}$, where $y \in \{-1,1\}$, and we adopt the Gaussian kernel with scale parameter equal to $10$ to cope with the non-linearity of the problem and let $r=1$; we notice that this results in $L=1$ and $\sigma^2=1$. We run \Cref{algo:kernelSsGM} with $w_k=\sqrt{k}$ for $100$ iterations. The classifier's decision boundary is reported in \Cref{fig:exp3} while its estimated expected classification error, and a test set of $1000$ examples, is about $0.1$.

\begin{figure}
\centering
\includegraphics[scale=0.5]{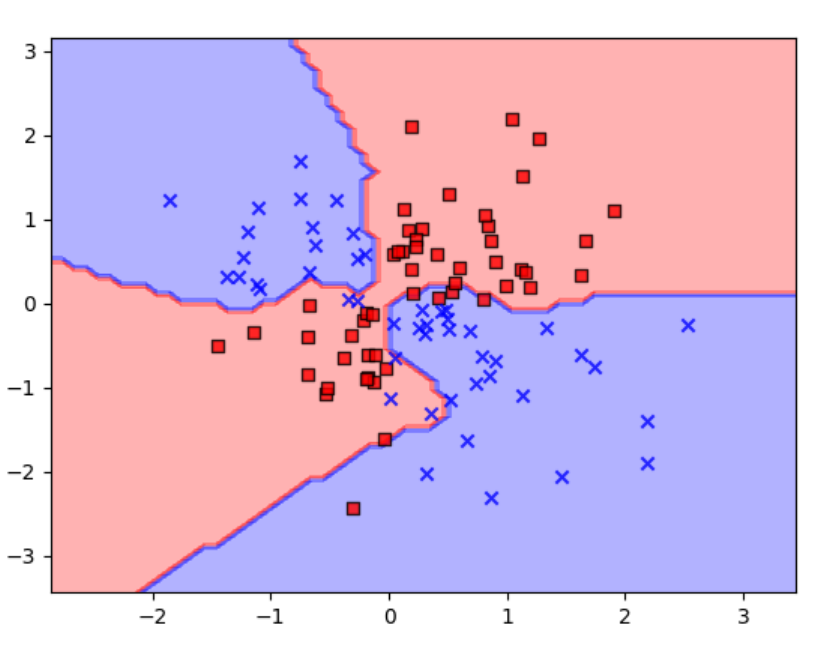}
\caption{Decision boundary of the classifier learned by \Cref{algo:kernelSsGM} with Gaussian kernel along with the negative (red squares) and the positive (blue squares) training examples.}
\label{fig:exp3}
\end{figure}

\section{Conclusion}
In this work we established high probability convergence rates for the projected stochastic subgradient method under heavy-tailed noise, that is, under the sole assumption that the stochastic subgradient oracle has uniformly bounded variance. We provide a unified analysis which simultaneously cover general averaging schemes, stepsizes and clipping levels, while avoiding large numerical constants in the statistical bounds and algorithm parameters. Moreover, we provided an application to the case of statistical learning with kernels which obtains near-optimal performances in a fully practicable fashion. Future interesting directions include avoiding the boundedness assumption on the constraint set and analyzing the last iterates, while keeping the simplicity and practicability of our parameter settings. Yet another valuable future research direction would be to extend our analysis to zero-order optimization settings.

\newpage
\appendix
\appendix
\section{Proofs of Section~\ref{sec:convanal}, Part 1}
\label{appA}

Here we provide detailed proofs of the results in the main paper.

\paragraph{Proof of Lemma~\ref{lemma:CSGproperties}}\ 

\vspace{0.5ex}
\ref{lemma3_i}:
By definition, $\bar{u} = m^{-1} \sum_{\iter=1}^m \hat{u}(x, \xi_j)$, with $(\xi_j)_{1 \leq j \leq m}$ iid random variables and moreover $\Ex[\hat{u}(x, \xi_j)] = u$ and $\Ex[ \norm{\hat{u}(x, \xi_j) - u}^2] \leq \sigma^2$. Thus, we have $\Ex[\bar{u}] = u$ and, for the variance of $\bar{u}$, $\Ex[\norm{\bar{u} - u}^2] \leq \sigma^2/m$. Now, recall that in general we have $\Ex[\norm{\bar{u} - \Ex[\bar{u}]}^2] = \Ex[\norm{\bar{u}}^2] - \norm{\Ex[\bar{u}]}^2$. Therefore, since $\norm{\Ex[\bar{u}]}^2 \leq \Ex[\norm{\bar{u}}^2] \leq L^2$, we have
\begin{equation*}
\Ex[\norm{\bar{u}}^2] =
\Ex[\norm{\bar{u} - \Ex[\bar{u}]}^2] + \norm{\Ex[\bar{u}]}^2 \leq \frac{\sigma^2}{m} + L^2.
\end{equation*}
The first inequality follows by the fact that $\norm{\tilde{u}}^2 \leq \norm{\bar{u}}^2$.
 
\ref{lemma3_ii}:
We start with some preliminaries. Let $\chi \coloneqq \mathbb{I}(\|\bar{u}\| > \lambda)$ and $\eta \coloneqq \mathbb{I}(\norm{\bar{u}-u} > \lambda-L)$, and notice that $\chi \leq \eta$ a.s. Indeed, suppose $\norm{\bar{u}} > \lambda$.
Then, recalling that $\lambda > L$ and $\norm{u} \leq L$, we have
\begin{equation*}
\norm{\bar{u}} \leq \norm{\bar{u}-u} + \norm{u} 
\ \Rightarrow\ \norm{\bar{u}-u} \geq \norm{\bar{u}} - \norm{u} 
> \lambda - L. 
\end{equation*}
Moreover, since $\Ex[\norm{\bar{u} - u}^2] \leq \sigma^2/m$, using Markov's inequality, we have
\begin{equation}
\Ex \eta = \Ex \eta^2 = \Prob\left(\|\bar{u}-u\| > \lambda-L\right) = \Prob\left(\|\bar{u}-u\|^2 > (\lambda-L)^2\right) \leq \frac{\sigma^2}{m(\lambda-L)^2}.
\label{eq:markov}
\end{equation}
Finally, since $\tilde{u} = (1-\chi) \bar{u} + \chi (\lambda\norm{\bar{u}}^{-1} \bar{u}) = \bar{u} + \chi(\lambda\norm{\bar{u}}^{-1}- 1) \bar{u}$, we have
\begin{align*}
\norm{\Ex[\tilde{u}] - u} 
&= \left\|\Ex\left[\bar{u} + \chi \left( \frac{\lambda}{\|\bar{u}\|} - 1\right) \bar{u}\right] - u \right\|
= \left\|\Ex\left[\chi \left( \frac{\lambda}{\|\bar{u}\|} - 1\right) \bar{u} \right] \right\|\\[2ex]
&\leq \Ex\left\|\chi \left( \frac{\lambda}{\|\bar{u}\|} - 1\right) \bar{u} \right\| 
\leq \Ex\bigg[\chi \left(1 - \frac{\lambda}{\|\bar{u}\|}\right) \norm{\bar{u}} \bigg]\\[2ex] 
& \leq \Ex\big[\chi \left(\norm{\bar{u}} - \lambda \right) \big]
\leq \Ex\big[\chi (\norm{\bar{u}} - \norm{u}) \big]\\[2ex]
&\leq \Ex\big[\chi \norm{\bar{u}-u} \big]
\leq \sqrt{\Ex[\chi^2] \Ex[\norm{\bar{u} - u}^2]} \\[2ex]
&\leq \sqrt{\Ex\eta^2 \Ex\norm{\bar{u}-u}^2} 
\leq \sqrt{ \frac{\sigma^2}{m(\lambda-L)^2} \frac{\sigma^2}{m}} \\
&\leq  \frac{\sigma^2}{m(\lambda-L)}.
\end{align*}

\ref{lemma3_iv}:
We have $\tilde{u} - u = (1-\chi)(\tilde{u} - u)+ \chi (\tilde{u} - u) = (1-\chi)(\bar{u} - u)+ \chi (\lambda \norm{\bar{u}}^{-1} \bar{u} - u)$, and hence
\begin{align*}
\Ex\norm{\tilde{u} - u}^2 &= 
\Ex\norm{(1-\chi)(\bar{u} - u)+ \chi (\lambda \norm{\bar{u}}^{-1} \bar{u} - u)}^2 \\[1ex]
&\leq \Ex[ (1-\chi)\norm{\bar{u}-u}^2] + \Ex [\chi\norm{\lambda\norm{\bar{u}}^{-1}\bar{u} - u}^2] \\[1ex]
&\leq  \Ex\norm{\bar{u}-u}^2 
+ \Ex [\chi(\lambda + L)^2] \\[1ex]
&\leq \frac{\sigma^2}{m} +
(\lambda + L)^2 \Ex[\eta]\\[1ex]
& \leq \frac{\sigma^2}{m}\bigg[1 + \bigg(\frac{\lambda+L}{\lambda-L}\bigg)^2 \bigg].
\end{align*}
\ref{lemma3_iii}:
This follows from \ref{lemma3_iv}
and the standard
fact that $\Ex \|X - \Ex[X] \|^2 \leq \Ex \|X - a\|^2$ for any vector $a$.
\qed

\paragraph{Proof of Lemma~\ref{lemma:ErrorDecomp} {\rm(}Decomposition of the error{\rm)}}\ 

\vspace{0.5ex}
First, by the definition of $x_{i+1}$, we derive
\begin{align*}
\|x_{\iter+1}-x\|^2 &\leq \|x_{\iter} - \gamma_\iter \tilde{u}_i -x\|^2 = \|x_{\iter} - x \|^2 - 2 \gamma_i \langle x_{\iter} - x, \tilde{u}_i \rangle + \gamma_\iter^2 \|\tilde{u}_\iter\|^2 \\
&= \|x_{\iter} - x \|^2 - 2 \gamma_\iter \langle x_{\iter} - x, u_\iter + (\tilde{u}_\iter - u_\iter) \rangle + \gamma_\iter^2 \|\tilde{u}_\iter\|^2 \\
&= \|x_{\iter} - x \|^2 - 2 \gamma_\iter \langle x_{\iter} - x, \tilde{u}_\iter - u_\iter \rangle - 2 \gamma_\iter \langle x_{\iter} - x, u_\iter \rangle + \gamma_\iter^2 \|\tilde{u}_\iter\|^2
\end{align*}
and hence
\begin{equation}
\label{eq:20220510a}
2 \gamma_\iter \langle x_{\iter} - x, u_\iter \rangle \leq \|x_{\iter} - x \|^2 - \|x_{\iter+1} - x \|^2 - 2 \gamma_\iter \langle x_{\iter} - x, \tilde{u}_\iter - u_\iter \rangle + \gamma_\iter^2 \|\tilde{u}_\iter\|^2.
\end{equation}
Now, since $u_\iter \in \partial f(x_\iter)$,
we have $f(x_\iter) - f(x) \leq \langle x_{\iter} - x, u_\iter \rangle$, which, together with \eqref{eq:20220510a}, yield
\begin{align*}
w_\iter(f(x_\iter) - f(x)) &\leq \frac{w_\iter}{2\gamma_\iter}
(\norm{x_{t} - x}^2 - \norm{x_{t+1} - x}^2)
+ w_\iter\langle x - x_{t}, \tilde{u}_\iter - u_\iter \rangle 
+ \frac{w_\iter\gamma_\iter}{2} \norm{\tilde{u}_\iter}^2.
\end{align*}
Summing up all the inequalities, dividing by $\sum_{\iter=1}^k w_\iter$, evaluating at $x=x^* \in \argmin_X f$ and using Jensen's inequality we get
\begin{multline}
\label{eq:20220512b}
 f(\bar{x}_\Iter) - f(x^*) \leq \frac{1}{\sum_{\iter=1}^k w_\iter} 
\bigg[ \frac 1 2\sum_{\iter=1}^{\Iter}
\frac{w_\iter}{\gamma_\iter} 
(\norm{x_{\iter} - x^*}^2 - \norm{x_{\iter+1} - x^*}^2)\\
+ \sum_{\iter=1}^{\Iter} w_\iter \langle x^* - x_{t}, \tilde{u}_\iter - u_\iter \rangle + \frac{1}{2}
\sum_{\iter=1}^{\Iter} w_\iter \gamma_\iter\norm{\tilde{u}_\iter}^2\bigg]
\end{multline}
Now, if we consider the first term within the brackets of \eqref{eq:20220512b}, we
get
\begin{align}
\label{eq:20240414b}
\nonumber\frac 1 2\sum_{\iter=1}^{\Iter} \frac{w_\iter}{\gamma_\iter} (\norm{x_{\iter} - x^*}^2 - \norm{x_{\iter+1} - x^*}^2)
& = \frac 1 2 \bigg[  \sum_{\iter=1}^\Iter \frac{w_\iter}{\gamma_\iter}
\norm{x_{\iter} - x^*}^2 - \sum_{\iter=1}^\Iter \frac{w_\iter}{\gamma_\iter} \norm{x_{\iter+1} - x^*}^2\bigg]\\
\nonumber& = \frac 1 2 \bigg( \frac{w_1}{\gamma_1} \norm{x_{1} - x^*}^2 - \frac{w_\Iter}{\gamma_\Iter} \norm{x_{\Iter+1} - x^*}^2 \bigg)\\
\nonumber&\qquad\qquad + \frac 1 2  \sum_{\iter=1}^{\Iter-1}\bigg(\frac{w_{\iter+1}}{\gamma_{\iter+1}}- \frac{w_{\iter}}{\gamma_{\iter}}\bigg) \norm{x_{\iter+1} - x^*}^2\\
\nonumber& \leq \frac 1 2 \bigg( \frac{w_1}{\gamma_1} D^2 - \frac{w_\Iter}{\gamma_\Iter} \norm{x_{\Iter+1} - x^*}^2 \bigg)\\
\nonumber&\qquad\qquad + \frac{D^2}{2}
\sum_{\iter=1}^{\Iter-1}\bigg(\frac{w_{\iter+1}}{\gamma_{\iter+1}}- \frac{w_{\iter}}{\gamma_{\iter}}\bigg)\\
\nonumber& \leq \bigg( \frac{w_1}{\gamma_1} \frac{D^2}{2} - \frac{w_k}{\gamma_\Iter} \frac{\norm{x_{\Iter+1} - x^*}^2}{2} \bigg)\\
\nonumber&\qquad\qquad +
\frac{D^2}{2}\bigg(\frac{w_\Iter}{\gamma_\Iter}- \frac{w_1}{\gamma_1}\bigg)\\
& \leq \frac{D^2}{2} \frac{w_\Iter}{\gamma_\Iter}.
\end{align}
Concerning, the second term in the RHS of \eqref{eq:20220512b},
setting $\tilde{\mu}_\iter = \Ex[\tilde{u}_\iter\,\vert\, x_1, \dots, x_\iter]$, we have
\begin{align*}
\sum_{\iter=1}^\Iter w_\iter 
\langle x^* - x_{\iter}, \tilde{u}_\iter - u_\iter \rangle
&= \sum_{\iter=1}^\Iter w_\iter 
\langle x^* - x_{\iter}, \tilde{u}_\iter - \tilde{\mu}_\iter \rangle
+ \sum_{\iter=1}^\Iter w_\iter 
\langle x^* - x_{\iter}, \tilde{\mu}_\iter - u_\iter \rangle\\
&= \sum_{\iter=1}^\Iter \theta_\iter^v + \sum_{\iter=1}^\Iter \theta_\iter^b.
\end{align*}
Finally, considering the last term in \eqref{eq:20220512b}, we have
\begin{align*}
\frac{1}{2} \sum_{\iter=1}^{\Iter} w_\iter\gamma_\iter\norm{\tilde{u}_\iter}^2 
&= \frac{1}{2} \sum_{\iter=1}^{\Iter} 
w_\iter\gamma_\iter \big(\norm{\tilde{u}_\iter}^2 - \Ex\big[\norm{\tilde{u}_\iter}^2\,\vert\, x_1, \dots, x_\iter\big]\big)\\
&\qquad\qquad + \frac{1}{2} \sum_{\iter=1}^{\Iter} w_\iter\gamma_\iter\Ex\big[\norm{\tilde{u}_\iter }^2\,\vert\, x_1, \dots, x_\iter\big]\\
&= \frac{1}{2}\sum_{\iter=1}^{\Iter} \zeta_\iter + \frac{1}{2} \sum_{\iter=1}^{\Iter} \nu_\iter.
\end{align*}
The statement follows. \qed

\paragraph{Proof of Proposition~\ref{prop:FreedmanBound} {\rm(}Freedman's Bound{\rm})}\ 

\vspace{0.5ex}
In the Bernstein's inequality for martingale (Fact~\ref{thm:freedman}) we have
\begin{equation*}
\Prob \left( \sum_{\iter=1}^{\Iter} X_\iter > \eta, \sum_{\iter=1}^{\Iter} \sigma_\iter^2 \leq F \right) \leq \exp{\left(-\frac{\eta^2}{2(F+\frac{\eta c}{3})}\right)}.
\end{equation*}
Now, for every $\eta>0$ and $\delta>0$ we have
\begin{equation*}
    \exp{\left(-\frac{\eta^2}{2(F+\frac{\eta c}{3})}\right)}
    \leq \frac{\delta}{2}\ \Leftrightarrow\ 
    \frac{\eta^2}{2(F+\frac{\eta c}{3})} \geq \log\left(\frac{2}{\delta}\right)\ \Leftrightarrow\ \eta^2 - \frac{2}{3} c \log\left(\frac{2}{\delta}\right) \eta - 2 F \log\left(\frac{2}{\delta}\right) \geq 0.
\end{equation*}
Hence, since we are assuming that  $\eta > 0$, it follows that 
the last inequality is equivalent to
\begin{equation*}
\eta \geq \frac{1}{3} c \log\left(\frac{2}{\delta}\right) + \frac{1}{2}\sqrt{\frac{4}{9} c^2 \log^2\left(\frac{2}{\delta}\right) + 8F\log\left(\frac{2}{\delta}\right)} \;.
\end{equation*}
Therefore, since $\sum_{i=1}^k \sigma_i^2 \leq F$ a.s., we have
\begin{equation*}
    \Prob\bigg(\sum_{i=1}^k X_i > \frac{1}{3} c \log\left(2/\delta\right) + \frac{1}{2}\sqrt{\frac{4}{9} c^2 \log^2\left(2/\delta\right) + 8F\log\left(2/\delta\right)} \bigg) \leq \frac \delta 2
\end{equation*}
and hence
\begin{equation*}
    \Prob\bigg(\sum_{i=1}^k X_i \leq \frac{1}{3} c \log\left(2/\delta\right) + \frac{1}{2}\sqrt{\frac{4}{9} c^2 \log^2\left(2/\delta\right) + 8F\log\left(2/\delta\right)} \bigg) \geq 1- \frac \delta 2.
\end{equation*}
Now the statement follows by noting that
\begin{equation*}
   \sqrt{\frac{4}{9} c^2 \log^2\left(\frac{2}{\delta}\right) + 8F\log\left(\frac{2}{\delta}\right)} 
   \leq 
   \frac{2}{3} c \log\left(\frac{2}{\delta}\right) + 2\sqrt{2F\log\left(\frac{2}{\delta}\right)}.
\end{equation*}
\vspace{-1ex}
\qed

\paragraph{Proof of Proposition~\ref{prop:AnalysisA} {\rm(}Analysis of the term {\bf A}{\rm)}}\ 

\vspace{0.5ex}
Let $i \in \{1,\dots, k\}$ and set $\tilde{\mu}_i = \Ex[ \tilde{u}_i\,\vert\, x_1, \dots, x_i]$. Then
we have
\begin{equation}
\abs{\theta_\iter^v} = 
w_\iter \abs{\langle \tilde{u}_\iter - \tilde{\mu}_\iter, x - x_\iter \rangle }
\leq w_\iter\norm{\tilde{u}_\iter -\! \tilde{\mu}_\iter} \norm{x \!- x_\iter}
\!\leq\! 2 D w_\iter\lambda_\iter
\leq 2 D \max_{1 \leq \iter \leq \Iter} w_\iter \lambda_\iter \eqqcolon c \;.
\end{equation}
Moreover, 
it is clear that
\begin{equation*}
\Ex\big[w_\iter\langle \tilde{u}_\iter - \tilde{\mu}_\iter, x - x_\iter \rangle\,\big\vert\, x_1, \dots, x_\iter\big] 
= w_\iter\langle \Ex\big[\tilde{u}_\iter - \tilde{\mu}_\iter\,\big\vert\, x_1, \dots, x_\iter\big], x - x_\iter \rangle = 0
\end{equation*}
and by the variance bound in Lemma~\ref{lemma:CSGproperties}\ref{lemma3_iv},
we have
\begin{align*}
\sigma_\iter^2 \coloneqq \Ex[ w_\iter^2\langle \tilde{u}_\iter - \tilde{\mu}_\iter, x^* - x_\iter \rangle^2\,\vert\, x_1,\dots,x_\iter] 
&\leq 
w_\iter^2\Ex[ \norm{\tilde{u}_\iter - \tilde{\mu}_\iter}^2 \norm{x^* - x_\iter}^2\,\vert\, x_1,\dots, x_\iter]\\
&\leq
\dfrac{D^2\sigma^2}{m} w_\iter^2\bigg[1 + \bigg(\dfrac{\lambda_\iter+L}{\lambda_\iter-L}\bigg)^2 \bigg]\\
& \leq \dfrac{D^2\sigma^2}{m} w_\iter^2 \bigg[1 + \left(\frac{2+\epsilon}{\epsilon}\right)^2 \bigg]\\
& \leq 4\dfrac{D^2\sigma^2}{m} w_\iter^2 \bigg(1+\frac 1 \varepsilon\bigg)^2
\end{align*}
Hence, the total conditional variance is
\begin{equation*}
V_\Iter:=\sum_{\iter=1}^{\Iter} \sigma_\iter^2 \leq 
4\dfrac{D^2\sigma^2}{m} \bigg(1+\frac 1 \varepsilon\bigg)^2 \sum_{i=1}^k w_\iter^2.
\end{equation*}
We now define
\begin{equation*}
F = 4\frac{D^2 \sigma^2}{m} \bigg(1+\frac 1 \varepsilon\bigg)^2\sum_{\iter=1}^\Iter 
w_\iter^2 \;.
\end{equation*}
Then by \Cref{prop:FreedmanBound} we have that with probability at least $1-\frac{\delta}{2}$
\begin{align*}
\sum_{\iter=1}^{\Iter} \theta_\iter^v &\leq  \frac{2}{3} c \log\left(\frac{1}{\delta}\right) + \sqrt{ 2 F\log\left(\frac{1}{\delta}\right)} \nonumber \\
&= \frac{4}{3} D \cdot \max_{1 \leq \iter \leq \Iter} w_\iter \lambda_\iter \cdot \log\left(\frac{2}{\delta}\right) + 2 \bigg(1+\frac 1 \varepsilon\bigg) \frac{D \sigma}{\sqrt{m}}\sqrt{2\sum_{\iter=1}^\Iter 
w_\iter^2  \cdot \log\left(\frac{2}{\delta}\right)} 
\end{align*}
and the statement follows. \qed

\paragraph{Proof of Proposition~\ref{prop:AnalysisB} {\rm(}Analysis of the term {\bf B}{\rm)}}\ 

\vspace{0.5ex}
Let $i \in \{1,\dots, k\}$ and set
$\tilde{\mu}_\iter=\Ex[\tilde{u}_i \,\vert\, x_1, \dots, x_i]$. Then
by the bias bound in Lemma~\ref{lemma:CSGproperties}\ref{lemma3_ii}
and Remark~\ref{rmk:eps}, we have
\begin{equation*}
\theta_\iter^b = w_\iter\langle \tilde{\mu}_\iter - u_\iter, x - x_\iter \rangle \leq 
w_\iter \norm{\tilde{\mu}_\iter - u_\iter} \norm{x-x_\iter} \leq 
\frac{D \sigma^2}{m} w_\iter 
\frac{1}{(\lambda_\iter-L)} 
\leq  \frac{D \sigma^2}{m} \bigg(1+\frac{1}{\varepsilon}\bigg) \frac{w_\iter}{\lambda_\iter} \quad \text{a.s.}
\end{equation*} 
Thus the statement follow.\qed

\paragraph{Proof of Proposition~\ref{prop:AnalysisC} {\rm(}Analysis of the term {\bf C}{\rm)}}\ 

\vspace{0.5ex}
Set, for the sake of brevity, $\tilde{\xi}_\iter = \Ex[\norm{\tilde{u}_\iter}^2 \,\vert\, x_1, \dots, x_i]$.
Then, we have
\begin{equation*}
    \abs{\zeta_\iter} = w_\iter \gamma_\iter 
    \abs{\norm{\tilde{u}_\iter}^2 - \tilde{\xi}_\iter}
    \leq 2 w_\iter \gamma_\iter \lambda_\iter^2 
    \leq 2 \max_{1 \leq \iter \leq \Iter} w_\iter \gamma_\iter \lambda^2_\iter =: c
\end{equation*}
and clearly 
\begin{equation*}
   \Ex[\zeta_\iter\,\vert\, x_1, \dots, x_i] = w_\iter \gamma_\iter \Ex[\norm{\tilde{u}_\iter}^2 - \tilde{\xi}_\iter \,\vert\, x_1, \dots, x_i]=0.
\end{equation*}
Moreover,
\begin{align*}
    \sigma^2_\iter &= w_\iter^2 \gamma_\iter^2 
    \Ex[(\norm{\tilde{u}_\iter}^2 - \tilde{\xi}_\iter)^2\,\vert\, x_1, \dots, x_i] \\[1ex]
    &\leq w_\iter^2 \gamma_\iter^2 \Ex[\norm{\tilde{u}_\iter}^4\,\vert\, x_1, \dots, x_i]\\[1ex]
    &= w_\iter^2 \gamma_\iter^2  \Ex[\norm{\tilde{u}_\iter}^2 \norm{\tilde{u}_\iter}^2\,\vert\, x_1, \dots, x_i]\\[1ex]
    & \leq w_\iter^2 \gamma_\iter^2 \lambda_\iter^2 
    \Ex[\norm{\tilde{u}_\iter}^2\,\vert\, x_1, \dots, x_i]\\[1ex]
    & \leq w_\iter^2 \gamma_\iter^2 \lambda_\iter^2 \bigg(\frac{\sigma^2}{m} + L^2\bigg).
\end{align*}
Thus, 

\begin{equation*}
V_{\Iter} \coloneqq \sum_{\iter=1}^\Iter \sigma^2_\iter \leq \bigg(\frac{\sigma^2}{m} + L^2 \bigg) \sum_{\iter=1}^\Iter w_\iter^2 \gamma_\iter^2 \lambda_\iter^2.
\end{equation*}
Now, setting

\begin{equation*}
F = \bigg(\frac{\sigma^2}{m} + L^2 \bigg) \sum_{\iter=1}^\Iter w_\iter^2 \gamma_\iter^2 \lambda_\iter^2 \;,
\end{equation*}
by \Cref{prop:FreedmanBound} we have that with probability at least $1-\delta$

\begin{align}
\sum_{i=1}^{\Iter} \zeta_\iter &\leq \eta \coloneqq \frac{2}{3} c \log\left(\frac{1}{\delta}\right) + \sqrt{2} \sqrt{F\log\left(\frac{1}{\delta}\right)} \nonumber \\
&= \frac{4}{3} \cdot \max_{1 \leq \iter \leq \Iter} w_\iter \gamma_\iter \lambda^2_\iter \cdot \log\left(\frac{1}{\delta}\right) + \sqrt{2} \sqrt{\bigg(\frac{\sigma^2}{m} + L^2 \bigg) \sum_{\iter=1}^\Iter w_\iter^2 \gamma_\iter^2 \lambda_\iter^2 \cdot \log\left(\frac{1}{\delta}\right)} \;.
\label{eq:Cbound}
\end{align}
\qed

\vspace{1ex}
\section{Proofs of Section~\ref{sec:convanal}, Part 2}
\label{appB}

Here we provide the proofs of the results pertaining to the part 2 of Section~\ref{sec:convanal}.

\paragraph{Proof of Lemma~\ref{lem:harmonicsums}}\  

\vspace{0.5ex}
We first prove the upper bounds. We start from the case $p \geq 0$. Since $\R_{++} \ni x \mapsto x^p$ is increasing, we have

\begin{align*}
\sum_{i=1}^k i^p &= \sum_{i=1}^{k-1} i^p + k^p \leq \int_{1}^{k} x^p dx + k^p = \left [\frac{x^{p+1}}{p+1} \right ]_{x=1}^{x=k} + k^p = \frac{k^{p+1}-1}{p+1} + k^p \\
&= \frac{k^{p+1}}{p+1} + k^p - \frac{1}{p+1} \;.
\end{align*}
Now
\begin{equation*}
\frac{k^{p+1}}{p+1} + k^p - \frac{1}{p+1} \leq k^{p+1}\ 
\Leftrightarrow\ 
\frac{p}{p+1} k^{p+1} - k^p \geq - \frac{1}{p+1}.
\end{equation*}
Since $f(x) = \frac{p}{p+1} x^{p+1} - x^p$ is increasing in $[1, \infty)$, the minimum is achieved at $x=1$. As a result, the above inequality is equivalent to
\begin{align*}
\frac{p}{p+1} - 1 \geq - \frac{1}{p+1}\Leftrightarrow\ \frac{p}{p+1} \geq 1-\frac{1}{p+1},
\end{align*}
which is always true. We have proved that for all $p \geq 0$
\begin{equation}
\label{eq:lemma3.9-1}
\sum_{i=1}^k i^p \leq k^{p+1} \;.
\end{equation}
Now consider $p<0$. Since $\R_{++} \ni x \mapsto x^p$ is decreasing, we have
\begin{align*}
\label{eq:lemma3.9-2}
\nonumber\sum_{i=1}^k i^p &= 1 + \sum_{i=2}^{k} i^p \leq 1 + \int_{1}^{k} x^p dx = 
\begin{cases}
\frac{k^{p+1}}{p+1} + \frac{p}{p+1} &\text{if } p \neq -1\\[1ex]
1 + \log k &\text{if } p = -1
\end{cases}\\[4ex]
&\leq
\begin{cases}
\frac{k^{p+1}}{p+1}  &\text{if } -1<p<0\\[1ex]
1 + \log k &\text{if } p = -1\\
\frac{p}{p+1} &\text{if } p < -1
\end{cases}
\end{align*}
and the bounds from above follow.
We now address the lower bounds. Suppose $p \geq 0$, then $f(x) = x^p$ is increasing and 
\begin{equation}
\label{eq:lemma3.9-3}
\sum_{i=1}^k i^p = 1 + \sum_{i=2}^k i^p \geq 1 + \int_{1}^k x^p dx = 1 + \frac{k^{p+1}-1}{p+1}
\geq \frac{k^{p+1}}{p+1}.
\end{equation}
Now suppose that $p \in \left[-1,0\right[$. Let $f(x) = x^p$, and consider the primitive 
\begin{equation*}
F(x) \coloneqq 
\begin{cases}
\frac{x^{p+1}}{p+1}, \quad \text{if } p \neq -1, \\[1ex]
\log x, \quad \text{if } p = -1.
\end{cases}
\end{equation*}
Then since $f(x)$ is convex and decreasing, from the trapezoidal rule we have
\begin{align*}
\sum_{i=1}^k i^p \geq F(k) - F(1) + \frac{1}{2}(f(1)+f(k)) \eqqcolon (\star) \;.
\end{align*}
Suppose $p \neq -1$. Then,
\begin{align*}
(\star) = \frac{1}{p+1}(k^{p+1}-1) + \frac 1 2 (1 + k^p) = \frac{k^{p+1}}{p+1} + \frac{k^p}{2} + \frac 1 2 - \frac{1}{p+1}.
\end{align*}
Now note that
\begin{align*}
\frac{k^{p+1}}{p+1} + \frac{k^p}{2} + \frac{1}{2} - \frac{1}{p+1} \geq k^{p+1}\ \Leftrightarrow\ \left(\frac{1}{p+1}-1\right) k^{p+1} + \frac{k^{p}}{2} \geq \frac{1}{p+1} -  \frac{1}{2} \;. 
\end{align*}
Moreover the function $g(x) = (-p/(p+1)) x^{p+1} + x^{p}/2$ is increasing in $\left[1/2, +\infty\right[$ and its minimum value over the integers is attained at $x=1$. Thus the above inequality is equivalent to
\begin{align*}
\left(\frac{1}{p+1} -1\right) + \frac 1 2 \geq \frac{1}{p+1} -  \frac{1}{2} \;,
\end{align*}
which is trivially true. Thus we have showed that
\begin{equation*}
\sum_{i=1}^k i^p \geq k^{1+p}.
\end{equation*}
Finally, suppose that $p=-1$. Then
\begin{equation*}
    (\star) = \log(k) - \log(1) + \frac{1}{2} \left( 1 + \frac{1}{k} \right) = \log(k) + \frac{1}{2} + \frac{1}{2k} \geq \log(k) + \frac{1}{2},
\end{equation*}
which concludes the proof.\qed

\paragraph{Proof of Lemma~\ref{lem:maxseq}}
Set for the sake of brevity $a_i = \max\{b\,i^t, c\,i^s\}$.
Suppose first that $t \geq 0$ and $s \leq 0$. Note that if $t - s = 0$, then $t=s=0$ and the statement holds. Therefore we assume $t - s \neq 0$. Now note that 
\begin{equation*}
b\, i^t \geq c\, i^s\ \Leftrightarrow\ i^{t-s} \geq \frac{c}{b}\ \Leftrightarrow\ i \geq \left(\frac{c}{b}\right)^{\frac{1}{t-s}}.
\end{equation*}
Let $n \coloneqq \big\lceil (c/b)^{\frac{1}{t-s}}  \big\rceil$, so that $i \geq n\ \Leftrightarrow\ i\geq (c/b)^{\frac{1}{t-s}}$. Consider the following cases on $k$.
\begin{itemize}
\item $k < n$. Then, $\forall\, i = 1,\dots,k$, $i < (c/b)^{\frac{1}{t-s}}$. Thus 
\begin{equation*}
\forall\, i = 1,\dots,k,\quad    a_i=\max\{b\, i^t, c\, i^s\} = c\, i^s\ \Rightarrow\ \max_{1 \leq i \leq k} a_i = c.
\end{equation*}
\item $k \geq n$. Then, $\forall\, i = 1, \dots,n-1, i < (c/b)^{\frac{1}{t-s}}$, and hence $\max\{b\, i^t, c\, i^s\} = c\, i^s\ \Rightarrow\ \max_{1 \leq i \leq n-1} a_i = c$. 
Moreover, $\forall\, i = n, \dots,k, i \geq (c/b)^{\frac{1}{t-s}}$, and hence $\max\{b\, i^t, c\, i^s\} = b\, i^t \Rightarrow \max_{n \leq i \leq k}  a_i = b\, k^t$. Thus, 
\begin{equation*}
\max_{1 \leq i \leq k} a_i = \max\{b\, k^t, c\} = \max\{b\, k^{t_+}, c\, k^{s_+}\} \;.
\end{equation*}
\end{itemize}

Now we consider the other cases. If $t\geq 0$ and $s \geq 0$,
or $t\leq 0$ and $s \leq 0$, then clearly the statement follows since the sequences $i \mapsto i^t$ and $i \mapsto i^s$ are both increasing or both decreasing. If $t \leq 0$ and $s \geq 0$,
then we can write $a_i = \max\{c\, i^s, b\, i^t\}$ and we can apply the first case obtaining $\max_{1 \leq i \leq k} a_i = \max\{c\,k^s, b\} = \max\{b\,k^{t_+}, c\, k^{s_+}\}$. The proof is complete. \qed

\vspace{1ex}
\paragraph{Proof of Lemma~\ref{lem:summax}}
Let $a_i = \max\{b\,i^t, c\,i^s\}$,
 $n = \lceil (c/b)^{1/(t-s)}\rceil$ and set $M = (c/b)^{1/(t-s)}$.
Then
\begin{equation*}
    i \geq n\ \Leftrightarrow\ i \geq (c/b)^{\frac{1}{t-s}}\ \Leftrightarrow\  b\,i^t \geq c\,i^s \ \Leftrightarrow\ a_i = b\,i^t.
\end{equation*}
We consider two cases as before.
\begin{itemize}
    \item $k<n$. Then, for every $i=1,\dots, k$, we have $a_i = c\,i^s$ and hence, by \Cref{lem:harmonicsums},
    \begin{equation}
    \label{eq:20230830b}
        \sum_{i=1}^k a_i = c \sum_{i=1}^k i^s \leq c\cdot
        \begin{cases}
        \displaystyle
        \frac{k^{s+1}}{(s+1)\textcolor{blue}{\wedge} 1} &\text{if } -1<s\\[2ex]
        1 + \log k &\text{if } -1=s\\[1ex]
        \displaystyle
        \frac{s}{s+1} &\text{if } s<-1
        \end{cases}
    \end{equation}
    \item $k \geq n$. Then, if $n>1$, for every $i=1, \dots, n-1$, we have $a_i = c i^s$ and hence (again by \Cref{lem:harmonicsums})
    \begin{equation*}
        \sum_{i=1}^{n-1} a_i = c \sum_{i=1}^{n-1} i^s \leq c\cdot
        \begin{cases}
        \displaystyle
        \frac{(n-1)^{s+1}}{(s+1)\textcolor{blue}{\wedge} 1} &\text{if } -1<s\\[2ex]
        1 + \log (n-1) &\text{if } -1=s\\[1ex]
        \displaystyle
        \frac{s}{s+1} &\text{if } s<-1.
        \end{cases}
    \end{equation*}
    Whereas, for every $i=n, \dots, k$, $a_i = b\, i^t$ and hence
    \begin{equation*}
        \sum_{i=n}^{k} a_i \leq b \sum_{i=1}^{k} i^s \leq b\cdot
        \begin{cases}
        \displaystyle
        \frac{k^{t+1}}{(t+1)\textcolor{blue}{\wedge} 1} &\text{if } -1<t\\[2ex]
        1 + \log k &\text{if } -1=t\\[1ex]
        \displaystyle
        \frac{t}{t+1} &\text{if } t<-1.
        \end{cases}
    \end{equation*}
    Therefore (for $k \geq n$), 
    \begin{equation}
    \label{eq:20230830a}
        \sum_{i=1}^k a_i \leq
        \begin{cases}
        \displaystyle
            \frac{c(n-1)^{s+1}}{(s+1)\wedge 1} +\frac{b\,k^{t+1}}{(1+t)\wedge 1} &\text{if } -1 < s <t\\[2ex]
            \displaystyle
            c(1 + \log (n-1))_+ +\frac{b\,k^{t+1}}{(1+t)\wedge 1} &\text{if } -1 = s <t\\[2ex]
            \displaystyle
            \frac{c s}{s+1} +\frac{b\,k^{t+1}}{(1+t)\wedge 1} &\text{if } s<-1 <t\\[2ex]
            \displaystyle
            \frac{c s}{s+1} + b(1+\log k) &\text{if } s<-1 = t\\[2ex]
            \displaystyle
            \frac{c s}{s+1} + \frac{b t}{t+1} &\text{if } s<t<-1.
        \end{cases}
    \end{equation}
\end{itemize}
We are now ready to bound the sum
\begin{equation*}
    \sum_{i=1}^k a_i.
\end{equation*}
We consider different cases.\\
Case $-1<s<t$. Suppose $k \geq (c/b)^{\frac{1}{t-s}}$. Then $n-1 \leq (c/b)^{\frac{1}{t-s}} \leq k$ and hence
\begin{equation*}
c(n-1)^{s+1} \leq c \bigg(\frac{c}{b}\bigg)^{\frac{s+1}{t-s}} = b \frac c b \bigg(\frac{c}{b}\bigg)^{\frac{s+1}{t-s}} = b \bigg(\frac{c}{b}\bigg)^{\frac{t+1}{t-s}} \leq b k^{t+1}.
\end{equation*}
Therefore, recalling the first of \eqref{eq:20230830a}, we have
\begin{equation*}
\sum_{i=1}^k a_i \leq \bigg(\frac{1}{(s+1)\wedge 1} + \frac{1}{(1+t)\wedge 1} \bigg) b k^{t+1}.
\end{equation*}
In the end, recalling also \eqref{eq:20230830b} we have
\begin{align*}
\forall\,k \in \N\colon\quad
\sum_{i=1}^k a_i &\leq \bigg(\frac{1}{(s+1)\wedge 1} + \frac{1}{(1+t)\wedge 1} \bigg) \max\big\{b\, k^{t+1}, c\, k^{s+1}\big\}\\[1ex]
& = \bigg(\frac{1}{(s+1)\wedge 1} + \frac{1}{(1+t)\wedge 1} \bigg) \max\big\{b, \frac{c}{k^{t-s}}\big\} k^{t+1}.
\end{align*}
Case $-1=s<t$. We first note that the function $x \mapsto (1+\log x)/x^{t+1}$
is bounded from above on $\left[1,+\infty\right[$ by $\max\{1, 1/(t+1)\} = ((t+1)\wedge 1)^{-1}$. Therefore we have for $n\geq 2$
\begin{equation*}
c(1+\log (n-1))_+ \leq c(1 + \log M)
\leq \frac{c}{(t+1)\wedge 1} M^{t+1},
\end{equation*}
while the above inequality is clearly true for $n=1$.
Now, noting that $t-s=t+1$ and recalling the definition of $M$, we have
\begin{equation*}
c(1+\log (n-1))_+ \leq 
\frac{c}{(t+1)\wedge 1} M^{t-s}
= \frac{c}{(t+1)\wedge 1} \frac{c}{b}
= \frac{b}{(t+1)\wedge 1} \frac{c^2}{b^2}.
\end{equation*}
and hence, recalling the second of \eqref{eq:20230830a} we have for $k \geq (c/b)^{1/(t-s)}$,
\begin{equation*}
\sum_{i=1}^k a_i \leq \bigg(\frac{(c/b)^2}{k^{t-s}} +1 \bigg) \frac{b k^{t+1}}{(t+1)\wedge 1}.
\end{equation*}
In the end, by \eqref{eq:20230830b},
since for $k\geq1$, $1+\log k \leq ((t+1)\wedge 1)^{-1} k^{t+1}$, we have
\begin{equation*}
\sum_{i=1}^k a_i \leq 
\frac{k^{t+1}}{(t+1)\wedge 1}\cdot
\begin{cases}
c &\text{if } k<(c/b)^{\frac{1}{t-s}}\\[1ex]
\displaystyle
b\bigg(\frac{(c/b)^2}{k^{t-s}} +1 \bigg)
&\text{if } k\geq(c/b)^{\frac{1}{t-s}}.
\end{cases}
\end{equation*}
Case $s<-1<t$. It follows from \eqref{eq:20230830b}
and \eqref{eq:20230830a} that
\begin{align*}
\sum_{i=1}^k a_i &\leq 
\begin{cases}
\displaystyle
\frac{cs}{s+1} &\text{if } k< (c/s)^{\frac{1}{t-s}}\\
\displaystyle
\frac{cs}{s+1} + \frac{b k^{t+1}}{(t+1)\wedge 1} &\text{if } k \geq (c/s)^{\frac{1}{t-s}}
\end{cases}\\[2ex]
&\leq \frac{cs}{s+1} + \max\bigg\{ b, \frac{c}{k^{t-s}} \bigg\} \frac{k^{t+1}}{(t+1)\wedge 1}.
\end{align*}
The other two cases follow directly from  \eqref{eq:20230830b}
and \eqref{eq:20230830a}. The proof is complete.\qed

\vspace{1ex}
\section{Additional convergence rates}
\label{appC}

\vspace{0.5ex}
In this section we provide a full derivation of the bounds stated in \Cref{rmk:in-expctation} and \Cref{rmk:SGnoise}.

As usual we assume that $f\colon H \to \R$ is a convex and $L$-Lipschitz continuous funtion defined on a Hilbert space and $X\subset H$ is a closed convex and bounded set with diameter $D \geq 0$. 

\subsection{A High-probability bound for standard SsGM under sub-Gaussian noise}

Here we assume that the noise is unbiased and sub-Gaussian, i.e.,
\begin{equation*}
\mathbb{E}[\hat{u}(x,\xi)] = u(x) \in \partial f(x) 
\quad \text{and} \quad
\mathbb{E}[\exp{\left(\|\hat{u}(x,\xi)-u(x)\|^2/\eta^2\right)}] \leq \exp{(1)}
\end{equation*}
We assume $\eta$ to be the smallest value satisfying the above sub-Gaussian condition.
\begin{fact}[Lemma~4.1 in \cite{Lan2020}]
\label{lm:SG_concentration}
Let $(\xi_t)_{1\leq i \leq k}$ be a sequence of random variables  adapted to the filtration $(\mathcal{F}_i)_{0\leq i \leq k}$. Suppose that there exists $(\eta_i)_{1\leq i \leq k} \in \R_{++}^k$ such that for every $i=1,\dots,k$ the followings hold
\begin{align}
\EE[\xi_i|\mathcal{F}_{i-1}] &= 0 \\
\EE[\exp{\left(\xi_i^2/\eta_i^2\right)}|\mathcal{F}_{i-1}] &\leq \exp{(1)} \;.
\end{align}
Then, for every $\lambda \geq 0$,
\[
\mathbb{P} \left(\sum_{i=1}^k \xi_i \geq \lambda \sqrt{\sum_{i=1}^k \eta_i^2} \right) \leq \exp{(-\lambda^2/3)} \;.
\]
\end{fact}
\begin{theorem}
Under the above assumptions, let $\delta \in (0,1)$ and $\gamma_i = \gamma/\sqrt{i}$ with 
\begin{equation}
\label{eq:opt_gamma}
\gamma = \sqrt{\frac{D^2}{4(\eta^2(1+\ln(2/\delta))+L^2)}} \;.
\end{equation} 
then with probability at least $1-\delta$
\begin{equation}
\label{eq:ssgm_sg_rate}
f(\bar{x}_k) - f^* \leq \frac{2}{\sqrt{k}} \left(DL + D \eta \sqrt{\ln\left(\frac{2e}{\delta}\right)} \right).
\end{equation}
\end{theorem}
\begin{proof}
Define $\mathcal{F}_i = \sigma(\hat{u}_1, \dots, \hat{u}_i)$.
We start from the following classical inequality
\[
f(x_i) - f^* \leq \frac{1}{2 \gamma_i}\Big(\|x_i - x^* \|^2 - \|x_{i+1} - x^* \|^2\Big) + \langle \hat{u}_i - u_i, x_i - x^* \rangle + \frac{\gamma_i}{2} \|\hat{u}_i\|^2.
\]
Summing all the inequalities from $1$ up to $k$, proceeding as in \eqref{eq:20240414b}, dividing by $k$, and using Jensen's inequality we get
\begin{align}
\label{eq:error_dec}
f(\bar{x}_k) - f^* & \leq \frac{1}{k} \bigg(\frac{D^2}{2 \gamma_k} + \underbrace{\sum_{\iter=1}^{\Iter} \langle \hat{u}_i - u_i, x^* - x_i \rangle}_{\textbf{A}} + \underbrace{\sum_{\iter=1}^{\Iter} \frac{\gamma_i}{2} \|\hat{u}_\iter\|^2}_{\textbf{B}} \bigg) \;.
\end{align}
Let's start from \textbf{A}. Define $Y_i = \langle \hat{u}_i - u_i, x^* - x_i \rangle$ and note that $\mathbb{E}[Y_i|\mathcal{F}_{i-1}]=0$. Furthermore, notice that
\begin{align*}
\mathbb{E}[\exp{(Y_i^2/(D^2 \eta^2))}|\mathcal{F}_{i-1}] &\leq \mathbb{E}[\exp{(D^2 \|\hat{u}_i-u_i\|^2/(D^2 \eta^2))}|\mathcal{F}_{i-1}] = \mathbb{E}[\exp{(\|\hat{u}_i-u_i\|^2/\eta^2)}|\mathcal{F}_{i-1}] \\
&\leq \exp(1) \;.
\end{align*}
Thus $(Y_i)_{1 \leq i \leq k}$ satisfy the assumptions of \Cref{lm:SG_concentration} with $\eta_i = D \eta$, so it holds
\[
\mathbb{P}\left(A \geq D \eta \lambda \sqrt{k} \right) \leq \exp{(-\lambda^2/3)} \;.
\]
Let $\delta \in (0,1)$, equating the RHS of the above to $\delta$, and solving for $\lambda$ one obtains
\begin{equation}
\label{eq:A_bound}    
\mathbb{P}\left(A \geq \sqrt{3} D \eta \sqrt{k \ln(2/\delta)} \right) \leq \delta/2 \;.
\end{equation}
For $\textbf{B}$ we split the term in
\[
\sum_{\iter=1}^{\Iter} \frac{\gamma_i}{2} \|\hat{u}_i\|^2 \leq \sum_{\iter=1}^{\Iter} \gamma_i \|\hat{u}_i - u_i\|^2 + \sum_{\iter=1}^{\Iter} \gamma_i \|u_i\|^2 \;.
\]
We start from the first term. Notice that defining $\theta_i = \gamma_i/(\sum_{i=1}^k \gamma_i)$, by the convexity of the exponential function it holds
\[
\exp{\left(\sum_{i=1}^k \theta_i (\|\hat{u}_i -u_i \|^2/\eta^2) \right)} \leq \sum_{i=1}^k \theta_i \exp{(\|\hat{u}_i -u_i \|^2/\eta^2)} \;.
\]
Taking expectation in both sides, using the sub-Gaussian assumption and noting that we have $\sum_{i=1}^k \theta_i = 1$, it holds
\begin{equation}
\label{eq:exp_convexity}
\mathbb{E} \left[\exp{\left(\sum_{i=1}^k \theta_i \|\hat{u}_i -u_i \|^2/\eta^2 \right)} \right] \leq 1 \;.
\end{equation}
Thus by applying Markov's inequality and equation \eqref{eq:exp_convexity} one obtains
\begin{align*}
\mathbb{P}\left(\sum_{t=1}^T \gamma_i \|\hat{u}_i-u_i\|^2 \leq (1+\ln(2/\delta)) \eta^2 \sum_{t=1}^T \gamma_i \right) &= \mathbb{P}\left(\sum_{t=1}^T \theta_i \|\hat{u}_i-u_i\|^2/\eta^2 \leq (1+\ln(2/\delta)) \right) \\
&\leq \frac{\mathbb{E} \left[\exp{\left(\sum_{t=1}^T \theta_i \|\hat{u}_i -u_i \|^2/\eta^2 \right)} \right]}{\exp(1+\ln(2/\delta))} \\
&\leq \exp(-\ln(2/\delta)) \\
&= \delta/2 \;.
\end{align*}
In the end, we have
\begin{equation}
\label{eq:B1_bound}
\mathbb{P}\left(\sum_{t=1}^T \gamma_i \|\hat{u}_i-u_i\|^2 \leq (1+\ln(2/\delta)) \eta^2 \sum_{t=1}^T \gamma_i \right) \leq \delta/2 \;.
\end{equation}
For the second term, suing the Lipschitz continuity of $f$ one has
\begin{equation}
\label{eq:B2_bound}
\sum_{\iter=1}^{\Iter} \gamma_i \|u_i\|^2 \leq L^2 \sum_{i=1}^k \gamma_i \;.
\end{equation}
Putting equations \eqref{eq:A_bound}, \eqref{eq:B1_bound}, and  \eqref{eq:B2_bound} together into \eqref{eq:error_dec}, with probability at least $1-\delta$ it holds
\begin{align*}
f(\bar{x}_k) - f^* & \leq \frac{1}{k} \left(\frac{D^2}{2 \gamma_k} + \sqrt{3} D \eta \sqrt{k \ln(2/\delta)} + \left(\eta^2(1+\ln(2/\delta)) + L^2 \right) \sum_{i=1}^k \gamma_i \right).
\end{align*}
If one set $\gamma_i = \gamma/\sqrt{i}$, then the above reduces to
\begin{align*}
f(\bar{x}_k) - f^* & \leq \frac{1}{\sqrt{k}} \left(\frac{D^2}{2} \frac{1}{\gamma} + 2 \left(\eta^2(1+\ln(2/\delta)) + L^2 \right)  \gamma + \sqrt{3} D \eta \sqrt{\ln(2/\delta)} \right).
\end{align*}
Optimizing in $\gamma$, one obtains
\begin{align}
f(\bar{x}_k) - f^* & \leq \frac{1}{\sqrt{k}} \left(2 \sqrt{D^2 \left(\eta^2(1+\ln(2/\delta)) + L^2 \right)} + \sqrt{3} D \eta \sqrt{\ln(2/\delta)} \right),
\end{align}
which is achieved with optimal
\begin{equation}
\gamma=\gamma^* = \sqrt{\frac{D^2}{4(\eta^2(1+\ln(2/\delta))+L^2)}} \;.
\end{equation}
The bound can be simplified at expenses of slightly worse constants as follows
\begin{align}
f(\bar{x}_k) - f^* & \leq \frac{2}{\sqrt{k}} \left(DL + D \eta \sqrt{\ln\left(\frac{2e}{\delta}\right)} \right).
\end{align}
\end{proof}

\subsection{An in-Expectation bound for Clipped-SsGM}

\begin{theorem}
Under the assumptions of \Cref{thm:main} with $p=0$, let $\varepsilon, \beta > 0$ and define $\lambda_i = \max\{\beta \sqrt{i}, (1+\varepsilon)L\}$ and $\gamma_i = \gamma/\sqrt{i}$ with 
\begin{equation}
\gamma = \sqrt{\frac{D^2}{2(\sigma^2/m + L^2)}}.
\end{equation} 
Then, for every $k \in \N$, 
\begin{equation}
\Ex[f(\bar{x}_k) - f^*] \leq 
\sqrt{\frac{D^2(\sigma^2/m + L^2)}{k}} + \frac{\sigma^2}{m} \left(1+\frac{1}{\varepsilon}\right) \sqrt{\frac{D^2}{\beta^2 k}}
\end{equation}
\end{theorem}

\begin{proof}
Starting from \Cref{lemma:ErrorDecomp} with $p=0$, $x=x^*$, and taking expectation in both sides one has
\begin{equation*}
\mathbb{E}[f(\bar{x}_k) - f^*] \leq 
\frac{1}{k}\bigg[
\frac{D^2}{2}
\frac{1}{\gamma_k}+
\underbrace{ \sum_{\iter=1}^{k} \mathbb{E}[\theta_i^b]}_{\textbf{B}} + \frac{1}{2} \underbrace{\sum_{\iter=1}^{k} \mathbb{E}[\nu_i]}_{\textbf{D}}
\bigg]
\end{equation*}
For the term \textbf{B}, by using Cauchy-Schwarz inequality, the tower rule of the expectation, and the bound on the bias of \Cref{rmk:eps}, one gets
\begin{align*}
\textbf{B} &\leq D \frac{\sigma^2}{m} \left(1+ \frac{1}{\varepsilon} \right) \sum_{i=1}^k \frac{1}{\lambda_i} \leq D \frac{\sigma^2}{m \beta} \left(1 + \frac{1}{\varepsilon} \right) \sum_{i=1}^k \frac{1}{\sqrt{i}} \leq  2 D \frac{\sigma^2}{m \beta} \left(1 + \frac{1}{\varepsilon} \right) \sqrt{k}.
\end{align*}
For the term \textbf{D}, one gets
\begin{align*}
\textbf{D} &= \frac{1}{2}\sum_{i=1}^k \gamma_i \mathbb{E}[\mathbb{E}[\|\tilde{u}_i\|^2 | x_1,\dots,x_i]] \leq \frac{1}{2} \left(\frac{\sigma^2}{m} + L^2 \right) \sum_{i=1}^k \gamma_i = \frac{\gamma}{2} \left(\frac{\sigma^2}{m} + L^2 \right) \sum_{i=1}^k \frac{1}{\sqrt{i}} \\
&\leq \gamma \left(\frac{\sigma^2}{m} + L^2 \right) \sqrt{k}
\end{align*}
Putting everything together, one gets
\begin{equation*}
\mathbb{E}[f(\bar{x}_k) - f^*] \leq \frac{1}{k} \bigg[\frac{D^2}{2}
\frac{\sqrt{k}}{\gamma} + 2 D \frac{\sigma^2}{m \beta} \left(1 + \frac{1}{\varepsilon} \right) \sqrt{k} + \gamma \left(\frac{\sigma^2}{m} + L^2 \right) \sqrt{k} \bigg].
\end{equation*}
Optimizing in $\gamma$, one obtains
\begin{equation*}
\Ex[f(\bar{x}_k) - f^*] \leq 
\sqrt{\frac{D^2(\sigma^2/m + L^2)}{k}} + \frac{\sigma^2}{m} \left(1+\frac{1}{\varepsilon}\right) \sqrt{\frac{D^2}{\beta^2 k}},   
\end{equation*}
which is achieved with the optimal
\begin{equation*}
\gamma = \gamma^* = \sqrt{\frac{D^2}{2(\sigma^2/m + L^2)}}.    
\end{equation*}
\end{proof}
\end{document}